\documentclass[12pt, a4paper]{article}
\usepackage{setspace}
\usepackage{geometry}
\usepackage{a4wide}
\pdfminorversion=4

\usepackage[OT1]{fontenc}
\usepackage{amsthm,amsmath}
\usepackage[colorlinks,citecolor=blue,urlcolor=blue]{hyperref}
\usepackage{amssymb,amsmath,bm,mathrsfs,makeidx,amsfonts,graphicx,amsthm,multirow}
\usepackage{booktabs}

\newcommand\x{\mathbf x}
\newcommand\z{\mathbf z}
\newcommand\y{\mathbf y}
\newcommand\bR{{\mathbb R}}
\newcommand\bZ{{\mathbb Z}}

\newcommand\wh{\widehat }

\newcommand\cov{\mathrm{Cov}}
\newcommand\bN{{\mathbb N}}
\newcommand\pto{\stackrel {\mathbb P}\rightarrow}

\newcommand\e{{\mathbb E}}
\newcommand\p{{\mathbb P}}
\newcommand\var{{\rm Var}}
\newcommand\tr{{\mathrm{tr}}}

\newcommand\cB{{\mathcal B}}
\newcommand\bC{{\mathbb C}}

\renewcommand\Im{{\rm\,Im} }

\newtheorem{theorem}{Theorem}[section]%
\newtheorem{lemma}[theorem]{Lemma}%
\newtheorem{definition}[theorem]{Definition}%

\begin{document}

\begin{center}
\Large Marchenko-Pastur law for a random tensor model.   
\end{center}
\begin{center}
\large Pavel~Yaskov\footnote{Steklov Mathematical Institute of RAS, Moscow, Russia\\
 e-mail: yaskov@mi-ras.ru\\This work is supported by the Russian Science Foundation under grant 18-71-10097.}
 \end{center}

\begin{abstract}  We study the limiting spectral distribution of large-dimensional sample covariance matrices associated with symmetric random tensors formed by $\binom{n}{d}$ different products of $d$ variables chosen from $n$ independent standardized random variables. We find optimal sufficient conditions for this  distribution to be the Marchenko-Pastur law in the case $d=d(n)$ and $n\to\infty$. Our conditions reduce to $d^2=o(n)$  when the variables have uniformly bounded fourth moments. The proofs are based on a new concentration inequality for quadratic forms in symmetric random tensors and a law of large numbers for elementary symmetric random polynomials.
\end{abstract}

\begin{center}
{\bf Keywords:} random matrices; random tensors; sample covariance matrices. 
\end{center}

\section{Introduction}

The paper studies the limiting behaviour of empirical spectral distributions of large-dimensional sample covariance matrices associated with the random tensor model investigated in \cite{BVH}. Namely, we consider sample covariance matrices of the form 
\begin{equation}
\label{e8}
\wh \Sigma_N=\frac{1}{N}\sum_{k=1}^N \x_{pk}\x_{pk}^\top,
\end{equation} 
where $\{\x_{pk}\}_{k=1}^N$ are i.i.d. copies of a random vector $\x_p$ in $\bR^p$ following the model below.

\begin{definition} \label{RT} \normalfont A random vector $\x_p$ in $\bR^p$ follows the random tensor model with parameters $d,n\in \bN, $ $d\leqslant n,$ if $p={{n}\choose {d}}$ and  the following holds.   
There exists a random vector $X=(X_\alpha)_{\alpha=1}^n$ in $\bR^n$ such that $\{X_\alpha\}_{\alpha=1}^n$ are independent, have zero mean and unit variance and $\x_p$ could be obtained by vectorizing the symmetric tensor $X^{\otimes d}$, i.e. the entries of $\x_p$ could be indexed by $d$-element subsets $i\subseteq\{1,\ldots,n\}$ and defined as products  $\prod_{\alpha \in i} X_\alpha.$ 
\end{definition}
Bryson, Vershynin, and Zhao \cite{BVH} show that the limiting spectral distribution of $\wh\Sigma_N$ is the Marchenko-Pastur (MP) law if $N\to\infty$, $\binom{n}{d}/N$ tends to a positive constant, the  fourth moments of $X_\alpha=X_{\alpha}(d,n)$ are uniformly bounded, and $d^3=o(n).$ However, they conjecture that the optimal condition is $d^2=o(n)$. We prove this conjecture in our paper. The proof is based on a general version of the MP theorem and a new concentration inequality for quadratic forms in symmetric random tensors. 

Concentration properties of quadratic forms provide a powerful tool to study the asymptotic behaviour of empirical spectral distributions of  sample covariance and related random matrices  (see \cite{A}, \cite{BZ}, \cite{DL}, \cite{GG}, \cite{GLPP}, \cite{EK} \cite{PP}, \cite{Y14}, \cite{Y18}). In particular, \cite{Y16} gives necessary and sufficient conditions for the MP theorem  in terms of concentration of certain quadratic forms. For various models of data, the concentration properties are established in \cite{BZ}, \cite{BVH}, \cite{L}, \cite{PP}, \cite{Y15}, \cite{Y18}, \cite{Y21}, among others. 

Lytova \cite{L}  obtains concentration inequalities for quadratic forms in non-symmetric random tensors $X^{(1)}\otimes \ldots \otimes X^{(d)}$ generated by $d$ independent copies of an isotropic random vector $X$ in $\bR^n$, quadratic forms in which concentrate around their means. In the simplest $L_2$ case, the results of \cite{L} are derived from the Efron-Stein inequality for the variance of a function in $\{X^{(k)}\}_{k=1}^d$.  When all $X=X(n)$, $n\in\bN,$ have independent entries with uniformly bounded fourth moments, Theorem 1.2 and Remark 4.1 in \cite{L} give a version of the MP theorem for the non-symmetric random tensor model under the assumption $d=o(n)$ (that seems to be optimal). Recently, Vershynin \cite{V} proves corresponding exponential concentration inequalities, assuming that $X$ has independent subgaussian entries.

In this paper, we derive $L_2$ concentration inequalities for quadratic forms in symmetric random tensors with optimal dependence on $d$ (at least, when $d=O(n)$). Our results improve those of \cite{BVH} and have some relevance to random chaoses. However, as noted in \cite{BVH}, known concentration inequalities for random chaoses \cite{A5}, \cite{A12}, \cite{GSS}, \cite{L6}, \cite{LL3}, \cite{L11} exhibit an unspecified (possibly exponential) dependence on the degree $d$, which is too bad for our problem. Also, comparing to the non-symmetric case, the strong dependence structure of the symmetric random tensor model highly complicates its analysis. In particular,
for the   quadratic form given by the squared $l_2$ norm, $\|X^{(1)}\otimes \ldots \otimes X^{(d)}\|^2$ is the product $\prod_{k=1}^d\|X^{(k)}\|^2$ of independent random variables and $\|X^{\otimes d}\|^2$ is the elementary symmetric polynomial of order $d$ in the squared entries of $X$. The latter plays a key role in the context of the MP theorem for the symmetric random tensor model, as the necessary condition for  the theorem is  given by $\binom{n}{d}^{-1}\|X^{\otimes d}\|^2\to 1$ in probability when $X=X(n)$, $d=d(n)$, and $n\to\infty$ (see Section \ref{results}). This condition could be viewed as a law of large numbers for elementary symmetric polynomials in independent nonnegative random variables. The asymptotic behaviour of such polynomials is thoroughly studied in \cite{EH}, \cite{HS}, \cite{M}, \cite{S}. Using the saddle-point approximation method of \cite{HS}, we find necessary and sufficient conditions (in terms of $d$ and $n$) for the above law of large numbers in the i.i.d. case.

The paper is structured as follows. Section \ref{results} contains main results. The proofs are deferred to Section \ref{proofs}. Auxiliary results are proved in the Supplementary Material.

\section{Main results}\label{results}
Let us introduce some notation.  For all $p\geqslant 1$, let $\x_p$ be a random vector in $\bR^p$ and let  $\bR^{p\times p}$ be  the set of all real $p\times p$ matrices. 
For $A\in \bR^{p\times p}$, denote its spectral norm by $\|A\|$ and, for symmetric $A$, let $\mu_A$ be its empirical spectral distribution defined by
$\mu_{A}:= p^{-1}\sum_{i=1}^p \delta_{\lambda_i},$
where $\{\lambda_i\}_{i=1}^p$ is the set of eigenvalues of $A$ (here we allow $\lambda_i=\lambda_j$ for $i\neq j$) and $\delta_\lambda$ stands for a Dirac measure with mass at $\lambda\in\bR$.  Also, put $\bC_+:=\{z\in\bC:\Im(z)>0\}$. Denote further by $\cB(\bR_+)$ the Borel $\sigma$-algebra of $\bR_+$, by $|S|$ the cardinality of a set $S$,  by $\mathbf{1}(S) $ the indicator function of $S$, and by $[n]_d$ the set $\{i\subseteq\{1,\ldots,n\}:|i|=d\}$.  For a set of random variables $X_n$, write $X_n=o_\p(1)$ if  $X_n $ tends to 0 in probability as $n\to\infty$, and $X_n=O_\p(1)$ if the set of variables $X_n$ is stochastically bounded. All random elements below  will be defined on the same probability space.

Recall the definition of the Marchenko-Pastur law $\mu_\rho$ ($\rho>0$), i.e.
\[d\mu_\rho=\max\{1-1/\rho,0\}\,d\delta_0+\frac{\sqrt{(a_+-x)(x-a_-)}}{2\pi x\rho   }{\mathbf{1}}\big( a_-\leqslant x\leqslant a_+\big)\,dx\,\text{ for }\,a_{\pm}=(1\pm\sqrt{\rho})^2,  \]
and a version of the Marchenko-Pastur theorem (going back to \cite{MP}) under the following general assumption\footnote{For independent but not identically distributed   $\{\x_{pk}\}_{k=1}^N$ given in the definition of $\wh\Sigma_N$, one can replace (A) by its averaged version as in Section 2 of \cite{Y14}, when stating the Marchenko-Pastur theorem. However, to simplify the presentation, we consider only the i.i.d. case in this paper.} that quadratic forms in $\x_p$ weakly concentrate around some values:

(A) $(\x_p^\top A_p \x_p -\tr(A_p ))/p\pto 0$ as $p\to\infty$ for all sequences of  symmetric positive semidefinite $A_p\in\bR^{p\times p}$ with $\|A_p\|\leqslant 1$.

\begin{theorem}\label{th:1}
Let $p=p(N)\in\bN$ satisfy  $p/N\to \rho>0$ as $N\to\infty$. Let also $\x_p$ be a random vector in $\bR^p$ for every $p$. If $\rm (A)$ holds, then
\begin{equation}\label{eq:mpp}
\p(\mu_{\wh \Sigma_N}\to  \mu_\rho\text{ weakly, } N\to\infty)=1.
\end{equation}
Furthermore, if \eqref{eq:mpp} holds and $\e\x_p\x_p^\top=I_p$  for all $p$, then $\x_p^\top\x_p/p\pto 1$ as $p=p(N)\to\infty$.
\end{theorem}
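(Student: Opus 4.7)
For the first statement, I would follow the Bai--Silverstein Stieltjes-transform blueprint. Writing $s_N(z) = \frac{1}{p}\tr(\wh\Sigma_N - zI_p)^{-1}$ for $z\in\bC_+$, one first shows $s_N(z)-\e s_N(z)\to 0$ almost surely by a martingale-difference decomposition along $\cF_k = \sigma(\x_{p1},\dots,\x_{pk})$: the rank-one perturbation bound $|s_N(z)-s_N^{(k)}(z)|\le 1/(p|\Im z|)$, where $s_N^{(k)}$ is the Stieltjes transform of the leave-one-out matrix $\wh\Sigma_N^{(k)} := \wh\Sigma_N - \x_{pk}\x_{pk}^\top/N$, combined with Burkholder's inequality and Borel--Cantelli gives the a.s.\ convergence. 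Next, using the Sherman--Morrison identity $\x_{pk}^\top(\wh\Sigma_N-zI)^{-1}\x_{pk} = W_k/(1+W_k/N)$ with $W_k = \x_{pk}^\top(\wh\Sigma_N^{(k)}-zI)^{-1}\x_{pk}$, one derives an equation for $\e s_N(z)$. Since $\x_{pk}$ is independent of $\wh\Sigma_N^{(k)}$, the complex resolvent $(\wh\Sigma_N^{(k)}-zI)^{-1}$ decomposes as a linear combination of PSD matrices of operator norm at most $|\Im z|^{-1}$, and applying assumption (A) to each component yields $W_k - \tr(\wh\Sigma_N^{(k)}-zI)^{-1} = o_\p(p)$. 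Substituting and passing to the limit produces the MP fixed-point equation $m_\rho(z)(1-\rho-z-z\rho m_\rho(z)) = 1$, and uniqueness of the Stieltjes transform then gives $\mu_{\wh\Sigma_N}\to\mu_\rho$ weakly almost surely.

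For the converse, set $Y_k = \x_{pk}^\top\x_{pk}/p$, so that the $Y_k$ are i.i.d.\ copies of $Y_p = \x_p^\top\x_p/p$ with $\e Y_p = 1$ and $Y_p\ge 0$, and $\bar Y_N := \frac{1}{N}\sum_k Y_k = \frac{1}{p}\tr\wh\Sigma_N = \int x\,d\mu_{\wh\Sigma_N}$. First I would show $\bar Y_N\pto 1$ via a truncation of the spectral integral: decompose $\bar Y_N = \int\min(x,M)\,d\mu_{\wh\Sigma_N} + R_N^{(M)}$ with $R_N^{(M)} = p^{-1}\sum_i(\lambda_i-M)_+\ge 0$. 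By \eqref{eq:mpp} and bounded continuity of $\min(\cdot,M)$, the first summand tends almost surely to $\int\min(x,M)\,d\mu_\rho$, which in turn approaches $\int x\,d\mu_\rho = 1$ as $M\to\infty$. Since $\e\bar Y_N = 1$, bounded convergence gives $\e R_N^{(M)}\to\int(x-M)_+\,d\mu_\rho$ as $N\to\infty$, and this limit vanishes as $M\to\infty$; Markov's bound then controls $R_N^{(M)}$ in probability uniformly in $N$ for large $M$, and these pieces combine to give $\bar Y_N\pto 1$.

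The hard part is upgrading $\bar Y_N\pto 1$ to $Y_p\pto 1$, which does not hold in general for triangular arrays of i.i.d.\ nonnegative variables with mean one (e.g., if $Y_k$ has a fixed nondegenerate distribution of mean one). Hence the full convergence in \eqref{eq:mpp} must be exploited. One route is to show $\frac{1}{p}\tr\wh\Sigma_N^2\pto 1+\rho = \int x^2\,d\mu_\rho$ by analogous truncation applied to $\min(x^2,M)$, and then invoke the identity $\e\frac{1}{p}\tr\wh\Sigma_N^2 = \rho_N\,\e Y_p^2 + 1 - 1/N$ (which uses $\e(\x_{pj}^\top\x_{pk})^2 = p$ for $j\ne k$, a consequence of isotropy) to force $\e Y_p^2\to 1$; this yields $\var(Y_p)\to 0$ and hence $Y_p\to 1$ in $L^2$ and in probability. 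The technical crux is securing uniform integrability of $\frac{1}{p}\tr\wh\Sigma_N^2$, which can be handled by first truncating each $\x_{pk}$ at level $C\sqrt p$ to bound the relevant second moment, running the argument, and then letting $C\to\infty$.
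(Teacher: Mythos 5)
For the first part, your Bai--Silverstein derivation is a genuinely different route from the paper's: the paper invokes its own universality result (Theorem~2 in \cite{Y18} with $\Sigma_T=I_T$), which under assumption (A) lets one replace $\x_p$ by a Gaussian vector without changing the limiting ESD, and then appeals to the classical Marchenko--Pastur theorem; you instead derive the MP fixed-point equation directly via the resolvent and Sherman--Morrison. Both are correct. Two routine points you glide over: the real part of the complex resolvent $(\wh\Sigma_N^{(k)}-zI)^{-1}$ is symmetric but not PSD, so it must be split as a difference of two PSD pieces before (A) can be applied; and (A) is stated for fixed deterministic sequences $A_p$, so to use it conditionally on the random leave-one-out resolvent one first needs the standard contradiction argument upgrading (A) to a statement uniform over $\|A_p\|\leqslant 1$. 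Neither is a gap, just detail to be supplied.

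For the converse there is a genuine gap. Your chain is $\tfrac1p\tr\wh\Sigma_N^2\pto 1+\rho$, then uniform integrability plus the identity $\tfrac1p\e\tr\wh\Sigma_N^2=\rho_N\,\e Y_p^2+1-1/N$ to get $\e Y_p^2\to1$, then Chebyshev. The first link does not follow from \eqref{eq:mpp}. Almost sure weak convergence of $\mu_{\wh\Sigma_N}$ to $\mu_\rho$ says nothing about $\int x^2\,d\mu_{\wh\Sigma_N}$: a vanishing fraction of eigenvalues (even a single one of size a power of $p$) can carry unbounded second moment while the ESD still converges weakly, and nothing in the hypotheses rules this out. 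Your analogous truncation argument for the first moment worked precisely because $\e\int x\,d\mu_{\wh\Sigma_N}=\e\bar Y_N=1$ is finite and known, so the tail remainder $R_N^{(M)}$ has finite expectation converging to $\int(x-M)_+\,d\mu_\rho$; there is no such handle for $\int x^2\,d\mu_{\wh\Sigma_N}$ since $\e Y_p^2$ may be $+\infty$ without contradicting isotropy or \eqref{eq:mpp}, and when it is infinite the identity you invoke is vacuous.

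Your proposed fix (truncating $\x_{pk}$ at level $C\sqrt p$) does not repair this. First, truncation destroys isotropy, so $\e(\x_{pj}^\top\x_{pk})^2=p$ fails for the truncated vectors and the key moment identity no longer holds. Second, Markov only bounds the probability of truncation by $1/C^2$, which for fixed $C$ is a constant fraction of the $N$ columns; by the rank inequality the truncated ESD is within $O(1/C)$ of $\mu_\rho$ in Kolmogorov distance but this does not constrain its second moment at all. Taking $C=C_p\to\infty$ restores the ESD but lets the truncated second moments $\leqslant C_p^4$ blow up, defeating the purpose. The paper avoids this entirely by citing Theorem~1.1 of \cite{Y16}, which proves necessity of (A) under isotropy with no higher-moment assumption on $\|\x_p\|^2/p$; any correct proof of the converse must likewise avoid passing through $\e Y_p^2$.
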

The first part of the theorem follows from Theorem 2 in \cite{Y18} (where we take $\Sigma_T=I_T$) and the classical Marchenko-Pastur theorem for $\x_p$ having i.i.d. Gaussian entries. The second part of the theorem follows from Theorem 1.1 in \cite{Y16}.

To apply Theorem \ref{th:1},  one has to check (A). For the random tensor model, this could be done via the following concentration inequality, which is our first main result (for its proof, see Section \ref{proofs}).

\begin{theorem}\label{th:2} Let $d,n\in\bN$, $n\geqslant 16d,$ and $p=\binom{n}{d}$. If $\x_p$ is a random vector in $\bR^p$ following the random tensor model and  $A_p\in\bR^{p\times p}$, then
\begin{equation*}
 \var(\x_p^\top A_p\x_p)\leqslant p\,\tr(A_pA_p^\top) \begin{cases} (1+K_nd/n)^d-1&\text{if }A_p\text{ is diagonal},\\
(1+2K_nd/n)^d(16d/n) (d\wedge 8)&\text{if }A_p\text{ is zero-diagonal},\\
 64K_n d^2/n&\text{if }A_p\text{ is arbitrary, }2K_nd^2 \leqslant n, \end{cases}
\end{equation*}
where $K_n=\max\{\e X_\alpha^4:1\leqslant \alpha\leqslant n\}$ with  $ X_\alpha$ constituting $\x_p$ according to Definition \ref{RT}.
\end{theorem}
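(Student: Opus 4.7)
We prove the three cases separately and then combine them. Since $\x_p^\top A_p\x_p=\x_p^\top A_p^\top\x_p$, we may assume throughout that $A_p$ is symmetric. For the arbitrary case we will decompose $A_p=D_p+E_p$ into its diagonal and zero-diagonal parts and use $\var(\x_p^\top A_p\x_p)\leqslant 2\var(\x_p^\top D_p\x_p)+2\var(\x_p^\top E_p\x_p)$, so it suffices to prove the first two bounds.

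\emph{Diagonal case.} Writing $A_p=\mathrm{diag}(A_i)_{i\in[n]_d}$ gives $\x_p^\top A_p\x_p=\sum_i A_i Y^i$ with $Y_\alpha:=X_\alpha^2$ (so $\e Y_\alpha=1$, $\e Y_\alpha^2\leqslant K_n$). A direct computation yields $\cov(Y^i,Y^k)=\prod_{\alpha\in i\cap k}\e X_\alpha^4-1\in[0,K_n^{|i\cap k|}-1]$. The inequality $|A_iA_k|\leqslant(A_i^2+A_k^2)/2$ and grouping by $s=|i\cap k|$ reduce the variance to $\tr(A_p^2)\sum_s(K_n^s-1)\binom{d}{s}\binom{n-d}{d-s}$. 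Recognizing $\binom{d}{s}\binom{n-d}{d-s}/\binom{n}{d}$ as the PMF of a hypergeometric variable $H$ and invoking Hoeffding's convex-comparison inequality between sampling without and with replacement gives $\e K_n^H\leqslant(1+(K_n-1)d/n)^d$, yielding the diagonal bound.

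\emph{Zero-diagonal case by induction on $d$.} With $\cF_k:=\sigma(X_1,\ldots,X_k)$, decompose $\x_p^\top A_p\x_p=\sum_{k=1}^n D_k$ into martingale differences (noting $\e\x_p^\top A_p\x_p=0$). Using the identity $\e[X^iX^j\mid\cF_k]=\mathbf{1}(i\triangle j\subseteq\{1,\ldots,k\})\prod_{(i\cap j)\cap\{1,\ldots,k\}}X_\alpha^2\prod_{i\triangle j}X_\alpha$, one finds $D_k=(X_k^2-1)U_k+X_kV_k$ for certain $\cF_{k-1}$-measurable $U_k,V_k$; combined with $|\e X_k^3|\leqslant\sqrt{K_n}$ and AM--GM this gives $\e D_k^2\leqslant 2K_n\e U_k^2+2\e V_k^2$. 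Crucially, $U_k=\e[\wt{\x}_{d-1}^\top\wt A_k\wt{\x}_{d-1}\mid\cF_{k-1}]$, where $\wt{\x}_{d-1}$ is the $(d-1)$-tensor on $\{X_\alpha\}_{\alpha\neq k}$ and $\wt A_k(\wt i,\wt j):=A_p(\wt i\cup\{k\},\wt j\cup\{k\})$ is zero-diagonal whenever $A_p$ is. Conditional Jensen and the inductive hypothesis at $(d-1,n-1)$ bound $\e U_k^2$, while summation over $k$ uses $\sum_k\tr(\wt A_k\wt A_k^\top)=\sum_{i,j}|i\cap j|A_p(i,j)^2\leqslant d\tr(A_pA_p^\top)$.

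\emph{Main obstacle and assembly.} The hardest step is the $V_k$ term, since $V_k=\e[\wt{\x}_{d-1}^\top\wt H_k\wt{\x}_d\mid\cF_{k-1}]$ is the conditional expectation of a bilinear form coupling tensors of \emph{different} degrees, so it does not reduce to the original quadratic-form problem. I expect the saturating factor $(d\wedge 8)$ to originate here, from a delicate combinatorial analysis of $\e(\wt{\x}_{d-1}^\top\wt H_k\wt{\x}_d)^2$ in which the quadruples $(\wt i,j,\wt i',j')$ are classified by the multiplicity pattern of $\wt i\cup j\cup\wt i'\cup j'$, the entries with multiplicity $3$ are controlled via $|\e X_\alpha^3|\leqslant\sqrt{K_n}$, and polynomial rather than exponential dependence on $d$ is tracked throughout. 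Finally, the arbitrary case under $2K_nd^2\leqslant n$ follows by combining the two previous bounds via the variance-triangle inequality and simplifying using $(1+K_nd/n)^d-1\leqslant 2K_nd^2/n$ together with $(1+2K_nd/n)^d\leqslant e$, which collects the constant $64$.
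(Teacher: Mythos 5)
Your diagonal-case argument is correct and is in fact a slightly tighter variant of the paper's: where the paper bounds $\binom{n-d}{d-t}/\binom{n}{d}\leqslant (d/n)^t$ termwise, you invoke Hoeffding's convex-ordering comparison of hypergeometric versus binomial sampling to get $\e K_n^H-1\leqslant (1+(K_n-1)d/n)^d-1\leqslant (1+K_nd/n)^d-1$. That part is fine. However, the zero-diagonal case --- which is where the real content of the theorem lies --- is not proved. Your martingale-by-filtration decomposition $D_k=(X_k^2-1)U_k+X_kV_k$ and the inductive handling of $\e U_k^2$ are plausible, but you explicitly stop at $\e V_k^2$ and write ``I expect the saturating factor $(d\wedge 8)$ to originate here, from a delicate combinatorial analysis\ldots'' This is a sketch of where work remains, not a proof; as you correctly note, $V_k$ is a conditional bilinear form coupling a degree-$(d-1)$ tensor with a degree-$d$ tensor and does not reduce to the inductive hypothesis, and you give no bound for it.

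Your guess about where $(d\wedge 8)$ comes from is also off the mark. In the paper's proof the zero-diagonal form is not attacked by a martingale at all: one writes $\x^\top A\x=\sum_{t=0}^{d-1}\Delta_t$ with $\Delta_t:=\sum_{|i\cap j|=t}a_{ij}x_ix_j$, applies the $L_2$ triangle inequality over $t$, and bounds $\e|\Delta_t|^2$ by classifying quadruples $(i,j,k,l)$ through $|\Lambda_3|/2+|\Lambda_4|=s$ and counting them via Lemma~\ref{l3}, which gives $\gamma(s,t)\leqslant 2^{4(d-t)}\binom{n}{d}\binom{t}{s}2^s(d/n)^{d-(t-s)}\mathbf 1(s\leqslant t)$. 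Summing yields $S:=\sum_{t=0}^{d-1}(16d/n)^{d-t}(1+2K_nd/n)^t$; the factor $d\wedge 8$ then falls out of two elementary estimates for this sum --- $d$ terms each at most $(16d/n)(1+2K_nd/n)^d$, versus a geometric-series bound using $n\geqslant 16d$ --- and has nothing intrinsically to do with third moments or $\Lambda_3$.

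Two smaller issues. First, in your step bound for $D_k$ the cross term $2\e(X_k^3-X_k)\e(U_kV_k)$ gives $\e D_k^2\leqslant 2K_n\e U_k^2+2\sqrt{K_n}\,\e V_k^2$, not $2K_n\e U_k^2+2\e V_k^2$ as written (you need $1+\sqrt{K_n}\leqslant 2\sqrt{K_n}$, not $\leqslant 2$). Second, your assembly of the arbitrary case via $\var(\x^\top A\x)\leqslant 2\var(\x^\top D\x)+2\var(\x^\top E\x)$ does not produce $64$: with $B_1\leqslant e^{1/2}K_nd^2/n$ and $B_2\leqslant 16e\,K_nd^2/n$ the factor-two inequality gives roughly $2(e^{1/2}+16e)\approx 90$. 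The paper instead uses $\sqrt{\var(\x^\top A\x)}\leqslant\sqrt{\var(\x^\top A_0\x)}+\sqrt{\var(\x^\top A_1\x)}$ and then exploits $\tr(A_0A_0^\top)+\tr(A_1A_1^\top)=\tr(AA^\top)$ via Cauchy--Schwarz, yielding $\var\leqslant p\,\tr(AA^\top)(B_1+B_2)$, which does come in under $64K_nd^2/n$.

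Bottom line: the zero-diagonal case is an open gap in your proposal, and without it neither the second nor the third bound is established.
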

Theorem \ref{th:2} improves the corresponding upper bound in Theorem 1.9 in \cite{BVH} by a factor of $n^{-1/3}$ (up to some constants). The latter follows from
 \[\frac{K_nd^2}{ n}=\frac{ (K_n^{1/2}d /n^{1/3})^2}{n^{1/3}}\quad\text{and}\quad\tr(A_pA_p^\top)\leqslant \|A_pA_p^\top\| \tr(I_p)=p\|A_p\|^2 .\] Note also that if $X_\alpha$ are i.i.d. over $\alpha$, then  $\var(\x_p^\top\x_p)\geqslant p^2 (K_n-1)d^2/n$  by Theorem 5.2 of Hoeffding \cite{H}. This shows the sharpness of our bound.

Theorem \ref{th:2}  guarantees that (A) holds for any sequence of random vectors  $\x_p $ following the random tensor model with  $p=\binom {n}{d}$ and $X_\alpha=X_\alpha(d,n)$, when  $K_nd^2=o(n)$ and $n\to\infty$ (here $d, n $ may depend on some parameter $N$ that goes to infinity). This along with Theorem \ref{th:1} gives a version of the Marchenko-Pastur theorem for the random tensor model under the fourth moment condition, extending Theorem 1.5 of \cite{BVH}, where it is assumed that $d^3=o(n)$ and $K_n=O(1)$. We can state a more general result, assuming only that the second moments are finite.

\begin{theorem} \label{th:3}
Let $d=d(N),n=n(N)\in\bN$, $N\in\bN$, satisfy $d\leqslant n$ and $\binom{n}{d}/N\to \rho>0$, whereinafter all limits are with respect to $N\to\infty$. Assume also that, for each $p=\binom{n}{d}$, $\x_p$ is a random vector in $\bR^p$ that follows the random tensor model with parameters $d,n$ and $X_\alpha=X_\alpha(d,n),$ $\alpha=1,\ldots,n$.
Then \eqref{eq:mpp} follows from \eqref{eq:14}, where 
\begin{equation}\label{eq:14}
d \e X_\alpha^2\mathbf{1}(dX_\alpha^2>n)\to0\text{ and }\dfrac{d^2}n \e X_\alpha^4 \mathbf{1}(dX_\alpha^2\leqslant n)\to0\text{ uniformly in $\alpha\in\{1,\ldots,n\}$.}
\end{equation}
Conversely, \eqref{eq:mpp} implies \eqref{eq:14} if, for all $d,n$, $\{X_\alpha\}_{\alpha=1}^n$ are independent copies of a random variable $X$ not depending on $d,n$ and such that $\e X=0,$ $\e X^2=1>\p(X^2=1)$.
 \end{theorem}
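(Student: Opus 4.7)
For the sufficiency direction, my plan is to apply Theorem~\ref{th:1} and reduce \eqref{eq:mpp} to verifying assumption (A) for $\x_p$. I would truncate each coordinate by setting $Y_\alpha = X_\alpha \mathbf{1}(dX_\alpha^2 \leqslant n)$, $m_\alpha = \e Y_\alpha$, $\sigma_\alpha^2 = \var Y_\alpha$, and $\tilde X_\alpha = (Y_\alpha - m_\alpha)/\sigma_\alpha$, and let $\tilde\x_p$ be the tensor built from $\{\tilde X_\alpha\}$. By construction $\{\tilde X_\alpha\}$ are independent, centered, and standardized, so $\tilde\x_p$ again follows the random tensor model. Writing $\epsilon_\alpha = \e X_\alpha^2\mathbf{1}(dX_\alpha^2 > n)$ and $\epsilon_n = \max_\alpha \epsilon_\alpha$, the first part of \eqref{eq:14} gives $d\epsilon_n\to0$, whence $|m_\alpha|\leqslant\sqrt{d/n}\,\epsilon_n$ and $|\sigma_\alpha^2-1|=O(\epsilon_n)$, and consequently $\e\tilde X_\alpha^4$ is bounded by $\e X_\alpha^4\mathbf{1}(dX_\alpha^2\leqslant n)$ up to a lower-order perturbation. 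The second part of \eqref{eq:14} then forces $\tilde K_n d^2/n\to 0$, where $\tilde K_n := \max_\alpha \e\tilde X_\alpha^4$. For large $n$ the third case of Theorem~\ref{th:2} applies and yields
\[\var(\tilde\x_p^\top A_p\tilde\x_p) \leqslant 64\,\tilde K_n d^2/n\cdot p\,\tr(A_pA_p^\top) \leqslant 64\,\tilde K_n d^2 p^2/n = o(p^2)\]
uniformly over positive semidefinite $A_p$ with $\|A_p\|\leqslant 1$. Since $\e\tilde\x_p\tilde\x_p^\top = I_p$, Chebyshev delivers $(\tilde\x_p^\top A_p\tilde\x_p - \tr A_p)/p \pto 0$.

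Next I would pass from $\tilde\x_p$ to $\x_p$ by showing $\e\|\x_p - \tilde\x_p\|^2 = o(p)$. Independence of $\{X_\alpha\}$ factorizes $\e\x_p[i]\tilde\x_p[i] = \prod_{\alpha\in i}\e X_\alpha\tilde X_\alpha$, and a direct computation gives
\[\e X_\alpha\tilde X_\alpha = \frac{1-\epsilon_\alpha}{\sigma_\alpha}\geqslant \sqrt{1-\epsilon_\alpha}\geqslant 1-\epsilon_\alpha,\]
so $1 - \prod_{\alpha\in i}\e X_\alpha\tilde X_\alpha \leqslant d\epsilon_n$. Summing $\e(\x_p[i]-\tilde\x_p[i])^2 = 2(1 - \prod_{\alpha\in i}\e X_\alpha\tilde X_\alpha)$ over $i\in[n]_d$ yields $\e\|\x_p-\tilde\x_p\|^2 \leqslant 2pd\epsilon_n = o(p)$. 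Combined with $\e\|\x_p\|^2=\e\|\tilde\x_p\|^2 = p$ and Cauchy--Schwarz,
\[\frac{|\x_p^\top A_p\x_p - \tilde\x_p^\top A_p\tilde\x_p|}{p} \leqslant \|A_p\|\,\frac{\|\x_p-\tilde\x_p\|(\|\x_p\|+\|\tilde\x_p\|)}{p} \pto 0,\]
so (A) follows and \eqref{eq:mpp} is established.

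For the converse, assume \eqref{eq:mpp}. Independence of $\{X_\alpha\}$ combined with $\e X_\alpha = 0$ and $\e X_\alpha^2 = 1$ forces $\e\x_p\x_p^\top = I_p$, so the second part of Theorem~\ref{th:1} yields $\x_p^\top\x_p/p \pto 1$. But $\x_p^\top\x_p = \sum_{i\in[n]_d}\prod_{\alpha\in i}X_\alpha^2$ is exactly the $d$-th elementary symmetric polynomial $e_d(X_1^2,\ldots,X_n^2)$ in i.i.d.\ nonnegative variables of unit mean. The paper's law of large numbers for $e_d$ (obtained via the saddle-point method of \cite{HS}) characterizes $\binom{n}{d}^{-1}e_d(X_1^2,\ldots,X_n^2)\pto 1$ as equivalent to \eqref{eq:14} applied to $X^2$ under the standing assumption $\p(X^2 = 1) < 1$; this yields \eqref{eq:14}.

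The principal obstacle is the $L^2$ comparison of $\x_p$ and $\tilde\x_p$: a direct expansion of $\prod_{\alpha\in i}(\sigma_\alpha\tilde X_\alpha + m_\alpha)$ produces $2^d$ terms and is hopeless to control termwise. The clean way around is to bypass the expansion entirely and compute $\e\|\x_p - \tilde\x_p\|^2$ via the scalar cross-correlations $\e X_\alpha\tilde X_\alpha$, which factorize across $\alpha$ by independence and individually differ from $1$ by the tail error $\epsilon_\alpha$ governed precisely by the first part of \eqref{eq:14}; this reduces the $2^d$-term combinatorics to a single $d$-fold product, and is the place where optimality of the hypotheses is genuinely used.
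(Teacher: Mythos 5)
Your sufficiency argument is correct, and it is a genuinely different route from the paper's. The paper truncates $X_\alpha$ to $Y_\alpha$, then centers to $Z_\alpha=Y_\alpha-\e Y_\alpha$, and only normalizes inside the application of Theorem~\ref{th:2}; this forces a three-stage comparison: a probability bound $\p(\x_p^\top A_p\x_p\neq\y_p^\top A_p\y_p)\to 0$ (truncation), a telescoping $L^1$ estimate $\e|\z_p^\top A_p\z_p-\y_p^\top A_p\y_p|=o(p)$ obtained by replacing the coordinates of $\y_p$ by those of $\z_p$ one index $k$ at a time (centering), and a separate Von Neumann trace argument showing $\tr(A_p(D_p-I_p))=o(p)$ where $D_p=\e\z_p\z_p^\top$ (normalization). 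You instead truncate, center, and normalize in a single step to get exactly standardized $\tilde X_\alpha$, which makes $\e\tilde\x_p\tilde\x_p^\top=I_p$ exact and eliminates the third step entirely, and you collapse the first two steps into one clean $L^2$ bound via the observation that $\e\|\x_p-\tilde\x_p\|^2$ is controlled by the scalar cross-correlations $\e X_\alpha\tilde X_\alpha=(1-\epsilon_\alpha)/\sigma_\alpha\in[1-\epsilon_\alpha,1]$, which factorize over $\alpha$ by independence. Your computation is right: $\sigma_\alpha^2=(1-\epsilon_\alpha)-m_\alpha^2\leqslant 1-\epsilon_\alpha$ gives the lower bound, Cauchy--Schwarz gives the upper, and $1-\prod_{\alpha\in i}\e X_\alpha\tilde X_\alpha\leqslant d\epsilon_n\to 0$ yields $\e\|\x_p-\tilde\x_p\|^2=o(p)$, from which $(\x_p^\top A_p\x_p-\tilde\x_p^\top A_p\tilde\x_p)/p\pto 0$ follows by Cauchy--Schwarz since $\e\|\x_p\|^2=\e\|\tilde\x_p\|^2=p$. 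The only detail worth flagging is that you should note $\sigma_\alpha>0$ for large $N$ (which follows from $\epsilon_n\to 0$ and $m_\alpha\to 0$) so that $\tilde X_\alpha$ is well defined; and the constant in your bound on $\tilde K_n$ should account for $\sigma_\alpha^{-4}$, which is harmless since $\sigma_\alpha^2\to 1$ uniformly. The converse direction is exactly the paper's: reduce to $\x_p^\top\x_p/p\pto 1$ via the second part of Theorem~\ref{th:1}, identify $\x_p^\top\x_p$ with the elementary symmetric polynomial in $X_\alpha^2$, and invoke Theorem~\ref{th:4} with $Z=X^2$. Overall your sufficiency proof is both correct and cleaner than the paper's, at the cost of relying on the factorization of cross-correlations that the paper's probability-plus-$L^1$ route avoids needing to exploit.
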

Theorem \ref{th:3} is proved in Section \ref{proofs}. The sufficiency part of the theorem follows from Theorem \ref{th:1} and \ref{th:2}. The necessity part follows from Theorem \ref{th:1} and the following law  of large numbers (LLN) for elementary symmetric polynomials of the form
 \[S_n^{(d)}:=\sum_{1\leqslant i_1<\ldots <i_d\leqslant n}Z_{i_1}\cdots Z_{i_d}, \quad d,n\in\bN.\]
 \begin{theorem} \label{th:4}
Let $Z,Z_1,Z_2,\ldots$ be i.i.d. nonnegative nondegenerate random variables. If $\e Z=1$ and $d=d(n)\in \{1,\ldots, n\}$ for  $n\in\bN,$ then the following are equivalent as $n\to\infty$:

{\rm (i)} $S_n^{(d)}/\binom{n}{d}\pto 1 $, 

{\rm (ii)} $d(S_n/n-1)\pto 0$ for $S_n=S_n^{(1)}$,

\vspace*{0.4em}{\rm (iii)} $d \e Z \mathbf{1}(dZ>n)\to 0 $  and $ d^2\e Z^2 \mathbf{1}(dZ\leqslant n)=o(n)$.

\end{theorem}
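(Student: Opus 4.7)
The plan is to establish equivalence in the order (ii)$\Leftrightarrow$(iii), (iii)$\Rightarrow$(i), (i)$\Rightarrow$(iii), combining a classical weak-law argument for triangular arrays, a truncation/second-moment estimate, and a contrapositive/saddle-point analysis.

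\emph{(ii)$\Leftrightarrow$(iii).} I would apply the classical weak law of large numbers for i.i.d.\ triangular arrays to $Y_{n,i}:=(d/n)(Z_i-1)$, $1\leq i\leq n$. The degenerate-convergence criterion (see Feller, Vol.~II, Ch.~VII) gives $\sum_i Y_{n,i}\pto 0$ iff $n\p(|Y_{n,1}|>1)\to 0$, $n\e[Y_{n,1}\mathbf{1}(|Y_{n,1}|\leq 1)]\to 0$, and $n\var(Y_{n,1}\mathbf{1}(|Y_{n,1}|\leq 1))\to 0$. Using $\e Z=1$, $Z\geq 0$, and $n/d\geq 1$ eventually, these three conditions translate, up to boundary terms bounded by $d\p(Z>n/d)\leq(d^2/n)\e Z\mathbf{1}(Z>n/d)=o(1)$, into the two conditions in (iii).

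\emph{(iii)$\Rightarrow$(i).} Truncate $W_i:=Z_i\mathbf{1}(Z_i\leq n/d)$ and consider $T_n^{(d)}:=\sum_{|I|=d}\prod_{i\in I}W_i$, $\mu_n:=\e W_1$, $\sigma_n^2:=\var W_1$. A union bound together with Markov gives $\p(\max_i Z_i>n/d)\leq n\p(Z>n/d)\leq d\e Z\mathbf{1}(Z>n/d)\to 0$, so $S_n^{(d)}=T_n^{(d)}$ with probability tending to 1; the first part of (iii) yields $d(1-\mu_n)\to 0$, hence $\mu_n^d=\e T_n^{(d)}/\binom{n}{d}\to 1$. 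Grouping pairs of $d$-subsets by intersection size $c$ and using Chu--Vandermonde,
\[
\frac{\var T_n^{(d)}}{\binom{n}{d}^2\mu_n^{2d}}=\e\bigl[(1+\sigma_n^2/\mu_n^2)^{Y_n}\bigr]-1,\qquad Y_n\sim\mathrm{Hypergeom}(n,d,d).
\]
Hoeffding's comparison $\e f(Y_n)\leq\e f(\tilde Y_n)$ for convex $f$ and $\tilde Y_n\sim\mathrm{Binom}(d,d/n)$ bounds this by $\exp(d^2\sigma_n^2/(n\mu_n^2))-1\to 0$ by the second part of (iii). Chebyshev then gives (i).

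\emph{(i)$\Rightarrow$(iii).} This is the heart of the proof, argued by contraposition, and I expect it to be the main obstacle. If the truncated-variance part of (iii) fails along a subsequence, keeping only the $c=1$ term in the hypergeometric sum above gives $\var(T_n^{(d)}/\binom{n}{d})\gtrsim (d^2/n)\sigma_n^2\mu_n^{2d-2}$ bounded below, and since $S_n^{(d)}=T_n^{(d)}$ with probability tending to 1 when the tail part still holds, $S_n^{(d)}/\binom{n}{d}$ cannot concentrate at 1. If the tail part fails, I subdivide: when $n\p(Z>n/d)\not\to 0$, the event that some $Z_{i_1}>n/d$ has probability bounded below, and the peeling decomposition
\[
\frac{S_n^{(d)}}{\binom{n}{d}}=\frac{d}{n}Z_{i_1}\cdot\frac{S_{n-1}^{(d-1)}}{\binom{n-1}{d-1}}+\Bigl(1-\frac{d}{n}\Bigr)\frac{S_{n-1}^{(d)}}{\binom{n-1}{d}}
\]
at such an index forces $S_n^{(d)}/\binom{n}{d}\geq 2+o_\p(1)$ there; when $n\p(Z>n/d)\to 0$, the conditional tail mass of $Z$ is concentrated far above $n/d$, which drives $\mu_n^d$ bounded away from~1. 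The delicate part is controlling the subsample quantities $S_{n-1}^{(d-1)}/\binom{n-1}{d-1}$ uniformly, essentially requiring an inductive version of (i). I would handle this via the saddle-point representation
\[
S_n^{(d)}=\frac{1}{2\pi r^d}\int_{-\pi}^{\pi}e^{-id\theta}\prod_{i=1}^n(1+re^{i\theta}Z_i)\,d\theta
\]
at the real saddle $r=r_n$ solving $n\e[rZ/(1+rZ)]=d$, following the method of Halász and Székely: this makes the effect of truncation on the asymptotics explicit and bypasses the ad hoc induction.
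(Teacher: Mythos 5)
Your scheme (ii)$\Leftrightarrow$(iii) matches the paper's: both go through the classical degenerate-convergence criterion for the triangular array $(d/n)(Z_i-1)$. The paper is a bit more careful in that it \emph{first} establishes $d^2=o(n)$ from any one of (i)--(iii) before invoking the criterion, since $b_n=n/d\to\infty$ is what lets one rewrite $\{|Z-1|>b_n\}$ as $\{Z>1+b_n\}$ and cleanly discard the lower truncation; your ``boundary terms'' estimate quietly uses part of (iii) while trying to prove it, so you should make this extraction of $d^2=o(n)$ explicit.

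Your (iii)$\Rightarrow$(i) is correct and is a genuinely different, more elementary route than the paper's. The paper obtains (iii)$\Rightarrow$(ii)$\Rightarrow$(i) through Lemma~\ref{l4} (the Hal\'asz--Sz\'ekely saddle-point representation of $\ln(S_n^{(d)}/\binom{n}{d})$) together with the chain of inequalities \eqref{eq:21}--\eqref{eq:22}. You instead truncate at $n/d$, compute $\var T_n^{(d)}/\bigl(\binom{n}{d}^2\mu_n^{2d}\bigr)=\e[(1+\sigma_n^2/\mu_n^2)^{Y_n}]-1$ with $Y_n$ hypergeometric, dominate it by the binomial via Hoeffding's convex-order comparison (sampling without vs.\ with replacement), and conclude by Chebyshev. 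This avoids the complex-analytic machinery entirely for this direction and is a nice simplification worth keeping.

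The real gap is in (i)$\Rightarrow$(iii). Two problems. First, a lower bound on $\var\bigl(T_n^{(d)}/\binom{n}{d}\bigr)$ does \emph{not} imply that $S_n^{(d)}/\binom{n}{d}\not\pto 1$: a sequence can have variance bounded away from zero (or even diverging) and still converge in probability to a constant, so ``cannot concentrate at~1'' does not follow from the $c=1$ term alone. What is needed is an \emph{anti-concentration} statement. This is exactly why the paper invokes Feige's theorem ($\p(S_n\le n+1)\ge 1/13$) and Petrov's concentration-function bound at the step (iv)$\Rightarrow d^2=o(n)$, and again (via Feige) to show $d^2\e Z^2\mathbf{1}(dZ\le n)=O(n)$ and $d\e Z\mathbf{1}(dZ>n)=O(1)$ in the proof of (i)$\Rightarrow$(ii). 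Second, your fallback to the saddle-point representation is circular as stated: Lemma~\ref{l4} requires $d^2=o(n)$ as a hypothesis, so you cannot use it to \emph{deduce} failure of (iii) in the regime where (iii) (hence $d^2=o(n)$) might fail. The paper resolves this by first deriving $d^2=o(n)$ from (i) \emph{without} the saddle-point formula, via Maclaurin's inequality $S_n/n\geq\bigl(S_n^{(d)}/\binom{n}{d}\bigr)^{1/d}$, which converts (i) into the one-sided tail bound $\p(d(S_n/n-1)<-\varepsilon)\to 0$; that one-sided bound is then contradicted by anti-concentration unless $d=o(\sqrt n)$. Your peeling identity is correct but, as you note, it requires uniform control of subsample quantities that you do not supply. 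To close the argument you should (a) insert the Maclaurin step to get the one-sided bound from (i), (b) invoke an anti-concentration input (Feige/Garnett plus a concentration-function bound) to deduce $d^2=o(n)$, and only then (c) appeal to the saddle-point representation, now legitimately, to finish.
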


Theorem \ref{th:4} is proved in Section \ref{proofs}. It gives necessary conditions for LLN for the $U$-statistic $U_{n}^{(d)}:=S_n^{(d)}/\binom{n}{d}$. The asymptotic distribution  of this $U$-statistic  is thoroughly studied in \cite{EH}, \cite{HS}, \cite{M}, \cite{S} for various asymptotic regimes. In particular, as is shown in \cite{EH}, under linear norming, the distribution may differ significantly for the cases $d^2=o(n)$, $d^2\sim c n$, and $d^2/n\to \infty$. However, to the best of our knowledge, there is no result in the literature, which gives (i)$\Rightarrow$(ii) under no assumptions on $d,n$, and higher-order moments of $Z$.\footnote{If $U_n^{(d)}\to 1$ in $L_2$, one can prove (ii) by using the following fact from the theory of $U$-statistics: $d(S_n/n-1)$ is the Hájek projection of $U_n^{(d)}-1$ in $L_2$, i.e. the projection on the set of sums $\sum_{k=1}^nf_k(Z_k)$, in particular, $\e|d(S_n/n-1)|^2\leqslant \e|U_n^{(d)}-1|^2$. However, this argument  works well only for $L_2$ convergence.} To prove (i)$\Rightarrow$(ii), we first show that (i) implies  $d^2=o(n)$ and then use the asymptotic representation for $S_n^{(d)}$ from Lemma \ref{l4} below. In the case of positive  $Z$, the lemma follows from formula (8) in \cite{S}, but the latter is stated without the proof. For completeness, we prove Lemma \ref{l4} in the Supplementary Material.
\begin{lemma}\label{l4} Under the conditions of Theorem \ref{th:3}, let $d^2=o(n)$ and $d\to\infty$. Then \begin{equation}\label{eq:17}
\ln\frac{S_n^{(d)}}{\binom{n}{d}}= \sum_{k=1}^n \ln(Z_k/\rho+1 )-d+d \ln(\rho d/n)+o_\p(1),\quad n\to\infty, 
\end{equation}
where $\rho=\rho(n,Z_1,\ldots,Z_n)$ is the unique solution of the equation
$ \sum_{k=1}^n  \rho/( Z_k+\rho)=n-d $
    if  such solution exists and $\rho=1$ otherwise.
\end{lemma}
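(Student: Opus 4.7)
The plan is to apply the saddle-point method to an integral representation of $S_n^{(d)}$. Since $S_n^{(d)}$ is the coefficient of $t^d$ in $\prod_{k=1}^n(1+tZ_k)$, Cauchy's formula gives
\[
S_n^{(d)}=\frac{1}{2\pi i}\oint\frac{\prod_{k=1}^n(1+tZ_k)}{t^{d+1}}\,dt.
\]
On the event $\Omega_n:=\{\#\{k:Z_k>0\}\geqslant d\}$, which has probability tending to $1$ because $\p(Z>0)>0$ and $d/n\to 0$, the equation $\sum_k Z_k/(\rho+Z_k)=d$ admits a unique positive solution $\rho$, the intended saddle-point (outside $\Omega_n$ the lemma sets $\rho=1$, a negligible contingency). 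Parametrizing the contour by $t=e^{i\theta}/\rho$ transforms the representation into
\[
S_n^{(d)}=\frac{\rho^d}{2\pi}\int_{-\pi}^{\pi} e^{L(\theta)}\,d\theta,\qquad L(\theta):=\sum_{k=1}^n\ln(1+e^{i\theta}Z_k/\rho)-id\theta,
\]
with $L(0)=\sum_k\ln(1+Z_k/\rho)$, $L'(0)=0$ by definition of $\rho$, and $L''(0)=-\sigma_n^2$ where $\sigma_n^2:=\sum_{k=1}^n Z_k\rho/(\rho+Z_k)^2$.

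For the asymptotics of this oscillatory integral, the identity $|1+we^{i\theta}|^2=(1+w)^2-2w(1-\cos\theta)$ for real $w\geqslant 0$ yields the global bound $|e^{L(\theta)}|\leqslant e^{L(0)-\sigma_n^2(1-\cos\theta)}$, while Taylor expansion around the origin gives $L(\theta)-L(0)=-\sigma_n^2\theta^2/2+O(d|\theta|^3)$ uniformly for $|\theta|\leqslant 1/2$ (using $\sum_k p_k=d$ with $p_k:=Z_k/(\rho+Z_k)\in[0,1]$, so that the cubic remainder $|e^{i\theta}-1|^3\sum_k p_k^3\leqslant|\theta|^3 d$). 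Localizing to the window $|\theta|\leqslant d^{-2/5}$, the Gaussian piece produces the main term while the complement decays exponentially, giving $S_n^{(d)}=\rho^d\prod_k(1+Z_k/\rho)\cdot(2\pi\sigma_n^2)^{-1/2}(1+o_\p(1))$ provided $\sigma_n^2\pto\infty$. Combining with Stirling's expansion $\ln\binom{n}{d}=d\ln(n/d)+d-\tfrac12\ln(2\pi d)+o(1)$, valid because $d^2=o(n)$, the target formula \eqref{eq:17} reduces to the single estimate $\sigma_n^2/d\pto 1$.

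The main obstacle is this last estimate, since no moment bound on $Z$ beyond $\e Z=1$ is available. Using $\sigma_n^2=d-\sum_k p_k^2$, the task reduces to $\sum_k p_k^2=o_\p(d)$. A preliminary step shows $\rho\geqslant cn/d$ with probability tending to $1$: the bound $Z_k/(\rho+Z_k)\geqslant Z_k/(2\rho)$ on $\{Z_k\leqslant\rho\}$ together with the weak law $\sum_k Z_k/n\pto 1$ forces $d\geqslant S_n/(2\rho)(1+o_\p(1))$. Next, choose a cut-off $c_n\to\infty$ with $c_n=o(n/d)$, possible because $n/d\to\infty$. On $\{Z_k\leqslant c_n\}$ the bound $p_k^2\leqslant (c_n/\rho)p_k$ gives a contribution of at most $(c_n/\rho)d=O_\p(c_n d^2/n)=o_\p(d)$. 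On $\{Z_k>c_n\}$ the bound $p_k^2\leqslant p_k\leqslant Z_k/\rho$ gives a contribution of at most $\rho^{-1}\sum_k Z_k\mathbf{1}(Z_k>c_n)$; since $\e[Z\mathbf{1}(Z>c_n)]\to 0$ by dominated convergence, this is $o_\p(n/\rho)=o_\p(d)$. Combined, $\sum_k p_k^2=o_\p(d)$, closing the argument.
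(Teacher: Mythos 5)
Your proposal is correct and follows the same saddle-point strategy as the paper's proof (which in turn follows Hal\'asz--Szekely \cite{HS}): Cauchy's coefficient formula, the saddle point $\rho$ defined by $\sum_k Z_k/(Z_k+\rho)=d$, a Gaussian approximation near $\theta=0$, and Stirling for $\binom{n}{d}$, with the whole lemma reducing to $-L''(0)/d\pto 1$. Where the two proofs genuinely diverge is in how they establish this last estimate (equivalently, $\sum_k p_k^2=o_\p(d)$ for $p_k=Z_k/(Z_k+\rho)$). You split at a cutoff $c_n=o(n/d)$ and use the rough bound $\rho\gtrsim n/d$ together with uniform integrability of $Z$; the paper instead first shows the sharper $\rho d/n\pto 1$ via dominated convergence and then gets the matching lower bound $-u''(0)\geqslant d(1+o_\p(1))$ in one line from the Cauchy--Schwarz inequality $(\bar\e\, Z/(Z+\rho))^2\leqslant \bar\e\, Z/(Z+\rho)^2\cdot\bar\e\, Z$ (plus Chebyshev's sum inequality for the trivial-direction upper bound). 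Your cutoff argument is more elementary and makes the role of uniform integrability explicit, while the paper's Cauchy--Schwarz step is shorter and cleaner. Two small points to tighten: (1) your derivation of $\rho\geqslant cn/d$ implicitly needs $\sum_{k:Z_k>\rho}Z_k=o_\p(n)$, which requires first recording $\p(\rho\geqslant c)\to 1$ for every fixed $c$ (as the paper does) before letting $c\to\infty$; (2) note that $\rho d/n\pto 1$ (two-sided, not just the lower bound) is in fact needed at the very end to conclude that $d\ln(\rho d/n)$ is a well-controlled quantity in the stated formula, and it also streamlines your $\sum_k p_k^2$ bound, so it is worth proving outright.
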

\section{Proofs}
\label{proofs}

\begin{proof}[Proof of Theorem \ref{th:1}.] For brevity, we will write $\x$, $A$, $K$ instead of $\x_p,$ $A_p$, $K_n$. Let further $\x=(x_i)$, $A=(a_{ij})$, where $i,j$ run all elements of $[n]_d$($=\{i\subseteq\{1,\ldots,n\}:|i|=d\}$). Assume w.l.o.g. that $A$ is non-zero (the result is trivial otherwise). First, consider the case of diagonal $A$. We have
$\var(\x^\top A\x)=\sum a_{ii}a_{jj}\cov(x_i^2,x_j^2),$
where the sum is over all $i,j\in[n]_d$. Note that $\cov(x_i^2,x_j^2)=0$ if $i\cap j=\varnothing$ and
\[|\cov(x_i^2,x_j^2)|\leqslant  \e x_i^2 x_j^2 =\prod_{\alpha\in i\Delta j}\e X_\alpha^2\prod_{\beta\in i\cap j}\e X_\beta^4\leqslant K^{|i\cap j|}\text{ for any $i,j.$}\]
 The latter along with the Cauchy inequality yields 
\[\var(\x^\top A\x)\leqslant \frac12\sum_{i,j:i\cap j\not=\varnothing }(a_{ii}^2+a_{jj}^2)K^{|i\cap j|}= \sum_{i,j:i\cap j\not=\varnothing } a_{ii}^2 K^{|i\cap j|}=\sum_{t=1}^d K^t\sum_{i } a_{ii}^2 \sum_{j: |i\cap j|=t }1.\]
If $|i\cap j|=t(\leqslant d)$ is fixed, then there are ${{d}\choose{t}}$ choices for choosing $i\cap j$ for any given $i$ and $\binom{n-d}{d-t}$ choices for choosing $j\setminus i$ for any given $i$ and $i\cap j$. Therefore, the very last sum is equal to $\binom{d}{t}\binom{n-d}{d-t}$.
As $d\leqslant n,$ we see that \[\frac{\binom{n-d}{d-t}}{\binom{n}{d}}=\frac{ d!/ (d-t)!}{n!/(n-t)!}\frac{(n-d)!/(n-d-(d-t))!}{(n-t)!/(n-d)! }\leqslant \prod_{s=0}^{t-1}\frac{ d-s }{n-s}\prod_{r=0}^{d-t-1}  \frac{n-d-r}{n-t-r}  \leqslant \bigg(\frac dn\bigg)^{t} 1^{d-t}.\]
Combining the above bounds and recalling that $p=\binom{n}{d}$, we derive that
\[\var(\x^\top A\x)\leqslant p\, \tr(AA^\top)\sum_{t=1}^d \binom{d}{t}(Kd/n)^t= p\,\tr(AA^\top)((1+Kd/n)^d-1).\]

Consider the case of  zero-diagonal $A$.
Let  $\Delta_t:= \sum a_{ij}x_{i}x_{j}\mathbf{1}(|i\cap j|=t)$,   $t\in[0,d)\cap\bZ_+$, where the sum is over  $i,j\in [n]_d$.
By the triangle inequality for the norm $\|\cdot\|_2=\sqrt{\e|\cdot|^2}$,
\[\sqrt{\var(\x^\top A\x)}=\sqrt{\e|\x^\top A  \x|^2}= \Big\|\sum_{t=0}^{d-1}\Delta_t\Big\|_2\leqslant  \sum_{t=0}^{d-1} \|\Delta_t\|_2= \sum_{t=0}^{d-1} \sqrt{\e|\Delta_t|^2}. \]
Let us estimate $\e|\Delta_t|^2$ for any fixed $t$. By definition,
\[\e|\Delta_t|^2=\sum_{\substack{i,j,k,l:|i\cap j|=|k\cap l|=t }} a_{ij}a_{kl}\e  x_{i}x_{j}x_kx_l.\]
The product $x_ix_jx_kx_l$ is equal to
\[\prod_{\alpha\in \Lambda_1}X_\alpha\prod_{\beta\in \Lambda_2}X_\beta^2\prod_{\gamma\in \Lambda_3}X_\gamma^3\prod_{\delta\in \Lambda_4}X_\delta^4,\]
where $\Lambda_c=\Lambda_c(i,j,k,l),$ $ c=1,\ldots,4,$ contains all $\alpha\in i\cup j\cup k\cup l$ that are covered by exactly $c$ sets among $i,j,k,l$. If $|\Lambda_1|\neq 0$, then $\e x_ix_jx_kx_l=0$. When  $|\Lambda_1|=0$, it follows from the independence of $\{X_\alpha\}_{\alpha=1}^n$ and the Cauchy–Schwarz inequality that 
\[|\e x_ix_jx_kx_l|=\prod_{\gamma\in \Lambda_3}|\e X_\gamma^3|\prod_{\delta\in \Lambda_4}\e X_\delta^4\leqslant 
 K^{|\Lambda_4|}\prod_{\gamma\in \Lambda_3}\sqrt{\e X_\gamma^4\e  X_\gamma^2}\leqslant  K^{s},\]
 where $s:=|\Lambda_3|/2+|\Lambda_4|\in\bZ_+$ if $|\Lambda_1|=0$ by $|\Lambda_1|+2|\Lambda_2|+3|\Lambda_3|+4|\Lambda_4|=4d$. We get that
\[\e|\Delta_t|^2\leqslant\sum_{s\in\bZ_+}K^s\sum_{(i,j,k,l)\in \Gamma(s,t)} |a_{ij}a_{kl}| \]
for $\Gamma(s,t):=\{ (i,j,k,l): |i\cap j|=|k\cap l|=t,|\Lambda_1|=0,|\Lambda_3|/2+|\Lambda_4|=s\}$ and $\Lambda_c=\Lambda_c(i,j,k,l)$, $c=1,3,4$.
 
By symmetry, for all given $i,j$ with $|i\cap j|=t$, the sets $\{(k,l): (i,j,k,l)\in \Gamma(s,t)\}$ have the same cardinality. Let us denote this cardinality by $\gamma(s,t)$. By the Cauchy inequality,
\[\sum_{(i,j,k,l)\in \Gamma(s,t)} |a_{ij}a_{kl}|\leqslant\frac12\sum_{(i,j,k,l)\in \Gamma(s,t)} (a_{ij}^2 + a_{kl}^2)= \sum_{(i,j,k,l)\in \Gamma(s,t)} a_{ij}^2=  \gamma(s,t) \sum_{i,j:|i\cap j|=t} a_{ij}^2.\]
Combining the above bounds (for all $t$) along with the Cauchy-Schwartz inequality gives 
\[\sqrt{\var(\x^\top A\x)}\leqslant   \sum_{t=0}^{d-1}\sqrt{\sum_{i,j:|i\cap j|=t} a_{ij}^2}\sqrt{ \sum_{s\in\bZ_+} K^s \gamma(s,t)}\leqslant \sqrt{\tr(AA^\top)}\sqrt{\sum_{t=0}^{d-1}\sum_{s\in\bZ_+} K^s \gamma(s,t)}. \]
\begin{lemma}\label{l3}  Under the above notations, for all $t\in[0,d)\cap \bZ_+ $ and $s\in\bZ_+,$ we have 
\[\gamma(s,t)\leqslant   2^{4(d-t)} \binom{n}{d} \binom{t}{s}2^s(d/n)^{d-(t-s)}{\bf 1}(s\leqslant t). \]
\end{lemma}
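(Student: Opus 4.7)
The plan is to compute $\gamma(s,t)$ exactly and then dominate each factor by the simple quantities in the stated upper bound. First I extract the combinatorial meaning of $\Gamma(s,t)$: for fixed $(i,j)$ with $|i\cap j|=t$, the conditions $|\Lambda_1|=0$ and $s=|\Lambda_3|/2+|\Lambda_4|$ together with $\sum_{c} c|\Lambda_c|=4d$ force $|U|:=|i\cup j\cup k\cup l|=2d-s$, whence inclusion--exclusion gives $|(i\cup j)\cap(k\cup l)|=2d-2t+s$ and so $|(i\cup j)\setminus(k\cup l)|=|(k\cup l)\setminus(i\cup j)|=t-s$. This forces $s\leqslant t$ (accounting for the indicator). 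Moreover, $|\Lambda_1|=0$ forces the $t-s$ elements of $i\cup j$ missing from $k\cup l$ to lie in $I_2=i\cap j$, and the $t-s$ elements of $k\cup l$ outside $i\cup j$ to lie in $k\cap l$.

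Second, I enumerate $(k,l)$ via three independent choices: choose the $t-s$ elements of $(k\cap l)\setminus(i\cup j)$ from $[n]\setminus(i\cup j)$ in $\binom{n-(2d-t)}{t-s}$ ways; choose the $t-s$ elements of $I_2$ excluded from $k\cup l$ in $\binom{t}{s}$ ways; and partition the remaining $2(d-t)+s$ elements of $(i\cup j)\cap(k\cup l)$ (all of $I_i\cup I_j$ plus $s$ elements of $I_2$) into labeled classes $k\setminus l$, $l\setminus k$, $(k\cap l)\cap(i\cup j)$ of sizes $d-t, d-t, s$ in $\binom{2(d-t)+s}{d-t,d-t,s}$ ways. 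This yields the exact formula
\[\gamma(s,t)=\binom{n-(2d-t)}{t-s}\binom{t}{s}\binom{2(d-t)+s}{d-t,d-t,s}\mathbf{1}(s\leqslant t).\]

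Third, I bound the two non-$\binom{t}{s}$ factors. Factoring the multinomial as $\binom{2(d-t)+s}{s}\binom{2(d-t)}{d-t}$ and applying $\binom{N}{K}\leqslant 2^N$ to each gives $\binom{2(d-t)+s}{d-t,d-t,s}\leqslant 2^{s+4(d-t)}$. For the remaining binomial, since $s\leqslant t\leqslant d-1$ one has $2d-t\geqslant d+1-(t-s)$, so $\binom{n-(2d-t)}{t-s}\leqslant \binom{n-d-1+(t-s)}{t-s}$ by monotonicity in the upper index; it then suffices to establish $\binom{n-d-1+r}{r}\leqslant \binom{n}{d}(d/n)^{d-r}$ for $0\leqslant r\leqslant d$. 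Writing this ratio explicitly and telescoping the overlapping integer products,
\[\frac{\binom{n-d-1+r}{r}\,(n/d)^{d-r}}{\binom{n}{d}}=\frac{n-d}{n-d+r}\prod_{j=1}^{d-r}\frac{(r+j)\,n}{(n-d+r+j)\,d},\]
each factor on the right is at most $1$: the first by $r\geqslant 0$, and each of the remaining ones because $(r+j)n\leqslant(n-d+r+j)d$ reduces to $(r+j)(n-d)\leqslant d(n-d)$, i.e.\ $r+j\leqslant d$, which holds for $j\leqslant d-r$. A product of nonnegative factors bounded by $1$ is at most $1$, giving the inequality.

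Multiplying these two bounds together with $\binom{t}{s}$ delivers $\gamma(s,t)\leqslant 2^{4(d-t)}\binom{n}{d}\binom{t}{s}\,2^s\,(d/n)^{d-(t-s)}$, which is the claim. The main subtlety is the last binomial inequality, where crude estimates fail and one needs the precise telescoping identity above to reduce the problem to the elementary constraint $r+j\leqslant d$.
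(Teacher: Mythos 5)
Your proof is correct, and the enumeration is a genuinely different (and arguably cleaner) route than the paper's. Where you derive $|i\cup j\cup k\cup l|=2d-s$ from $\sum_c c|\Lambda_c|=4d$ and inclusion--exclusion, and then obtain the closed formula
\[
\gamma(s,t)=\binom{n-(2d-t)}{t-s}\binom{t}{s}\binom{2(d-t)+s}{d-t,\,d-t,\,s}\,\mathbf{1}(s\leqslant t),
\]
the paper introduces the auxiliary parameter $r=|u\cap v|$ and enumerates via a five-step choice that produces the representation $\gamma(s,t)=\sum_{r=0}^s\binom{t}{r}\binom{2(d-t)}{s-r}\binom{t-r}{s-r}\binom{n-2d+t}{t-s}\binom{2(d-t)}{d-t}\mathbf{1}(s\leqslant t)$; the two exact formulas agree after a Vandermonde collapse of the $r$-sum, but your version sidesteps the summation entirely. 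Both approaches then estimate the ``middle'' factors by $\binom{N}{K}\leqslant 2^N$, and both must establish the binomial ratio bound $\binom{n-2d+t}{t-s}\leqslant\binom{n}{d}(d/n)^{d-(t-s)}$. Here you proceed by first weakening the top index (using $s\leqslant t\leqslant d-1$) so that it depends only on $r=t-s$ and then telescoping; the paper instead directly factors the ratio as two products, bounding the first by $1$ and the second termwise by $d/n$. These are essentially equivalent, though your two-step reduction is a bit more modular. Net effect: you obtain exactly the stated bound $2^{4(d-t)}\binom{n}{d}\binom{t}{s}2^s(d/n)^{d-(t-s)}\mathbf{1}(s\leqslant t)$. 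The one thing worth making explicit when writing this up is the justification that the three choices in your enumeration are independent and reconstruct $(k,l)$ bijectively --- in particular that $k\setminus l$ and $l\setminus k$ are automatically contained in $(i\cup j)\cap(k\cup l)$ because $(k\cup l)\setminus(i\cup j)\subseteq k\cap l$ --- but this is a straightforward check, not a gap.
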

Lemma \ref{l3} is proved in the Supplementary Material.
For $p=\binom{n}{d}$, it implies that   
\[\frac{\var(\x^\top A\x)}{ p\,\tr(AA^\top)}\leqslant  \sum_{t=0}^{d-1 } \bigg(\frac{2^4d}n\bigg)^{d-t }\sum_{s=0}^{t} \binom{t}{s}\bigg(\frac{2Kd}n\bigg)^s =
  \sum_{t=0}^{d-1 } \bigg(\frac{16 d}n\bigg)^{d-t }\bigg(1+\frac{2Kd}n\bigg)^t.\]
If $S$ is the last sum, then $S\leqslant (16 d^2/n)(1+ 2Kd/n)^d$. Also, as $n\geqslant 16 d$ and $K\geqslant (\e X_1^2)^2= 1$,
\[S=  \frac{16d}{n} \frac{(1+2Kd/n)^d-(16d/n)^d} {1+2Kd/n- 16d/n}\leqslant \frac{16d}{n} \frac{(1+2Kd/n)^d} {1-14d/n}\leqslant   \frac{2^7d}{n}  (1+2Kd/n)^d.\] This proves the desired bound for zero-diagonal $A$. 
The variance bound for an arbitrary matrix $A$ follows from the corresponding bounds for $A_0=(a_{ij}\mathbf{1}(i=j))_{i,j\in[n]_d}$ and $A_1=A-A_0$ along with the inequalities  $\sqrt{\var(\x^\top A\x)}\leqslant \sqrt{\var(\x^\top A_0\x)}+\sqrt{\var(\x^\top A_1\x)}$, $K\geqslant 1,$ $(1+x)^d\leqslant e^{dx}$ and $(1+x)^d-1\leqslant dxe^{dx}$, $x\geqslant 0$.

\end{proof}
\begin{proof}[Proof of Theorem \ref{th:3}] Suppose (ii) holds. To apply Theorem \ref{th:1}, we will verify (A). For each $p=\binom{n}{d}$, let $A_p\in\bR^{p\times p}$ be a positive semidefinite  symmetric matrix with $\|A_p\|\leqslant 1$. Consider a random vector $\y_p$ defined as $\x_p$ in Definition \ref{RT} with $X_\alpha$ replaced by $Y_\alpha:=X_\alpha \mathbf{1}(dX_\alpha^2\leqslant n)$. As $\{\x_{p}^\top A_p \x_{p}\neq \y_{p}^\top A_p \y_{p}\}\subseteq \bigcup_{\alpha=1}^n\{dX_\alpha^2>n\}$, \eqref{eq:14} yields that
\[\p(\x_{p}^\top A_p \x_{p}\neq \y_{p}^\top A_p \y_{p})\leqslant  \sum_{\alpha=1}^n\p(dX_\alpha^2>n)\leqslant  n\max_{1\leqslant \alpha\leqslant n} \e (dX_{\alpha}^2/n)\mathbf{1}(dX_\alpha^2>n)=:L_{n,d}\to0.\]
Thus, it sufficient to check (A) for $ \x_{p}$ replaced by $\y_{p}$. We will do it by showing that for $\z_p$ defined as $\x_p$ in Definition \ref{RT} with $X_\alpha$ replaced by $Z_\alpha:=Y_\alpha -\e Y_\alpha$, the following holds:

(a) $(\z_{p}^\top A_p \z_{p}-\y_{p}^\top A_p \y_p)/p\pto 0$, 

(b) $(\z_{p}^\top A_p \z_{p}-\e  \z_p^\top A_p \z_{p})/p\pto0$,

(c) $(\e  \z_p^\top A_p \z_{p} -\tr(A_p))/p\pto0$.

Let us prove (a). Set $\z_{p,0}:=\y_p$ and define $\z_{p,k }$, $k=1,\ldots,n,$ as $\y_p$ above with $Y_\alpha$ replaced by $Z_\alpha$ for $\alpha=1,\ldots , k$. We have
\[\e |\z_{p}^\top A_p \z_{p}-\y_{p}^\top A_p \y_p|\leqslant \e |\z_{p,n}^\top A_p \z_{p,n}-\z_{p,0}^\top A_p \z_{p,0}|\leqslant  \sum_{k=1}^n\e\Delta_{p,k},\]
where $\Delta_{p,k}:= |\z_{p,k}^\top A_p\z_{p,k} -\z_{p,k-1}^\top A_p \z_{p,k-1} |=|\|A_p^{1/2}\z_{p,k}\|^2-\|A_p^{1/2}\z_{p,k-1}\|^2|$. To estimate $\e\Delta_{p,k}$ for any given $k$, we will use the inequality
\[\e|\xi_0^2-\xi_1^2|\leqslant  \sqrt{ \e(\xi_0 -\xi_1)^2}\sqrt{\e(\xi_0+\xi_1)^2}\leqslant 2 \max_{q=0,1} \sqrt{\e\xi_q^2}\sqrt{ \e(\xi_0 -\xi_1)^2}\]
valid for any $\xi_0,$ $\xi_1$ in $L_2$. Taking $\xi_q=\|A_p^{1/2}\z_{p,k-q}\|$ and using that $\|A_p^{1/2}\|=\sqrt{ \|A_p \|}\leqslant 1$, we get
$|\xi_0-\xi_1|\leqslant\|A_p^{1/2}(\z_{p,k}-\z_{p,k-1})\|\leqslant\|\z_{p,k}-\z_{p,k-1}\|$  and
 $\e \xi_q^2\leqslant\e\|\z_{p,k-q}\|^2 \leqslant p$ as 
\begin{equation}\label{eq:11}
\e Z_\alpha^2=\var (Y_\alpha)\leqslant \e Y_\alpha^2=\e X_\alpha^2 \mathbf{1}(dX_\alpha^2\leqslant n)\leqslant\e X_\alpha^2=1.
\end{equation}
Thus, $\e \Delta_{p,k}\leqslant   2 \sqrt{p} \sqrt{ \e \|\z_{p,k}  - \z_{p,k-1} \|^2}$. The entries $\z_{p,k}  - \z_{p,k-1}$ have the form 
\[\mathbf{1}(k\in i)\prod_{\alpha:\,\alpha \in i ,\,1\leqslant \alpha<k }Z_\alpha \cdot \e Y_k \cdot \prod_{\alpha:\,\alpha \in i,\,k<\alpha\leqslant n}Y_\alpha \]
for $i\in[n]_d$($=\{j\subseteq \{1,\ldots,n\}:  |i|=d\}$), here $\prod_{i\in\varnothing}=1$. By \eqref{eq:11},
\[\e \|\z_{p,k}  - \z_{p,k-1} \|^2\leqslant  (\e Y_k)^2 \sum_{i\in[n]_d: k \in i  }\mathbf{1}(k\in i)=(\e Y_k)^2 \binom {n-1}{d-1}=\]\[=(\e Y_k)^2 \binom {n}{d}\frac{d}{n}=\frac{pd}{n}(\e X_k \mathbf{1}(dX_k^2\leqslant n))^2.\]
It follows from $\e X_k=0$, that $\e X_k \mathbf{1}(dX_k^2\leqslant n)=-\e X_k \mathbf{1}(dX_k^2>  n)$ and
\begin{equation}
\label{eq:12}
|\e X_k \mathbf{1}(dX_k^2\leqslant n)|\leqslant  \e |X_k| \mathbf{1}(dX_k^2>  n) \leqslant \sqrt{d/n}\,  \e X_k^2\mathbf{1}(dX_k^2>  n)  
\end{equation}
Combining the above estimates yields
\[\e |\z_{p}^\top A_p \z_{p}-\y_{p}^\top A_p \y_p| \leqslant  \sum_{k=1}^n\e\Delta_{p,k}\leqslant   \frac{2pd}{n}\sum_{k=1}^n\e X_k^2\mathbf{1}(dX_k^2>n)\leqslant 
2p L_{n,d}=o(p).\]
This implies (a). 

Let us prove (b). Applying Theorem \ref{th:2} to $\bar \z_p$ defined as $\x_p$ in Definition \ref{RT} with $X_\alpha$ replaced by $Z_\alpha/\sqrt{\e Z_\alpha^2}$, we get
\[\var(\z_{p}^\top A_p \z_{p})= \var(\bar\z_{p}^\top D_p^{1/2}  A_p  D_p^{1/2} \bar \z_{p})\leqslant \frac{ 64 K_n pd^2}n \tr((D_p^{1/2}  A_p  D_p^{1/2})^2)\text{ if }2K_nd^2\leqslant n,\]
where  $D_p:=\e\z_p\z_p^\top\in\bR^{p\times p}$ is  diagonal with diagonal entries $\prod_{\alpha \in i}\e Z_\alpha^2$, $i\in[n]_d$, and 
\[K_n:=\max_{1\leqslant \alpha \leqslant n}\frac{\e Z_\alpha^4}{(\e Z_\alpha^2)^2}\leqslant \frac{16\max\limits_{1\leqslant \alpha \leqslant n} \e X_\alpha^4\mathbf{1}(dX_\alpha^2\leqslant  n)}{\min\limits_{1\leqslant \alpha \leqslant n} (\e Z_\alpha^2) ^2},\]
here we have used that $\e Z_\alpha^4=\e (Y_\alpha-\e Y_\alpha)^4\leqslant 8(\e Y_\alpha^4+(\e Y_\alpha)^4)\leqslant 16\e Y_\alpha^4$.
By \eqref{eq:12} and $L_{n,d}=o(1)$, we have  uniformly in $\alpha$,
\[\e Y_\alpha = \e X_\alpha \mathbf{1}(dX_\alpha^2\leqslant n)=o((dn)^{-1/2}),\]
\begin{equation}
\label{eq:13}
\e Z_\alpha^2 = \e Y_\alpha^2-(\e Y_\alpha)^2= 1-\e  X_\alpha^2 \mathbf{1}(dX_\alpha^2>n)+o((dn)^{-1/2})=1+o(d^{-1})
\end{equation}
As a result, by \eqref{eq:14}, $d^2K_n/n =o(1)/(1+o(d^{-1}))=o(1)$. In addition, 
\[\tr((D_p^{1/2}  A_p  D_p^{1/2})^2)\leqslant p\|(D_p^{1/2}  A_p  D_p^{1/2})^2\|\leqslant p\|A_p\|^2 \|D_p^{1/2}\|^4\leqslant p\|D_p^{1/2}\|^4=\]\[= p\max\limits_{i\in[n]_d} \Big( \prod_{\alpha \in i}\e Z_\alpha^2 \Big)^2\leqslant p \Big(\max\limits_{1\leqslant \alpha\leqslant  n}\e Z_\alpha^2\Big)^{2d}= p (1+o(d^{-1}))^{4d}=p(1+o(1)).\]
Finally, we conclude that $K_nd^2=o(n)$ and
$\var(\z_{p}^\top A_p \z_{p})\leqslant p^2 o(1)$, which implies (b). 

The relation (c) follows from the fact that 
\[\e  \z_p^\top A_p \z_{p}= \tr(A_p D_p)=\tr(A_p)+\tr(A_p (D_p-I_p))\]
and $\tr(A_p (D_p-I_p))=o(p)$. Here the last equality could be derived from Von Neumann's trace inequality and \eqref{eq:13} as follows,
\[p^{-1}|\tr(A_p (D_p-I_p))|\leqslant \|A_p\| \|D_p-I_p\|\leqslant \|D_p-I_p\|=\max\limits_{i\in[n]_d} \Big| \prod_{\alpha \in i}\e Z_\alpha^2 -1 \Big|\leqslant\]\[\leqslant      |\max\limits_{1\leqslant \alpha\leqslant  n}(\e Z_\alpha^2)^d-1|+  
|\min\limits_{1\leqslant \alpha\leqslant  n}(\e Z_\alpha^2)^d-1|=   |(1+o(d^{-1}))^d-1|+|(1+o(d^{-1}))^d-1|=o(1).\]
We have verified the sufficiency part of the theorem.

Let us prove the necessity part, i.e. \eqref{eq:mpp}$\Rightarrow$\eqref{eq:14}. Let \eqref{eq:mpp} holds and, for each $d,n$, $X_\alpha=X_\alpha(d,n)$ are independent copies of   $X$ with $\e X=0,$ $\e X^2=1>\p(X^2=1)$. So, by Theorem \ref{th:1}, $ \x_p^\top\x_p /p\to 1$ in probability. Taking $Z=X^2$ in Theorem \ref{th:3}, we see that
\begin{center}
$\x_p^\top\x_p/p$ and $S_n^{(d)}/\binom{n}{d}$ from Theorem \ref{th:3}  have the same distribution. 
\end{center}
As convergence in distribution to a constant implies convergence in probability to the same constant, (i) in Theorem \ref{th:3} holds. By (i)$\Rightarrow$(iii) in Theorem \ref{th:3}, \eqref{eq:14} holds.
\end{proof}

\begin{proof}[Proof of Theorem \ref{th:4}.] First, we will show that any of the conditions (i), (ii), (iii) implies that $d^2=o(n)$. Indeed, if (iii) holds, then $\e Z\mathbf{1}(dZ\leqslant n)=1-\e Z\mathbf{1}(dZ>n)=1-o(d^{-1})$ and $ d^2(1-o(1))=d^2(\e Z\mathbf{1}(dZ\leqslant n))^2\leqslant d^2\e Z^2\mathbf{1}(dZ\leqslant n)=o(n),$
hereinafter all limits are with respect to $n\to\infty$. 
To prove that both  (i) and  (ii) imply  $d^2=o(n)$, we introduce one more condition: 

(iv) $\p(d(S_n/n-1)<-\varepsilon)\to 0$ for all $\varepsilon>0$.
\\Obviously, (ii) $\Rightarrow$ (iv). Also, (i) $\Rightarrow$ (iv). Indeed, if (i) holds, then 
\begin{equation} \label{mcl}
 d(S_n/n-1)\geqslant  d\ln\frac{ S_n}{n}\geqslant \ln \frac{S_n^{(d)}}{\binom{n}{d}}=o_\p(1), 
 \end{equation}
where we have used Maclaurin's inequality $S_n/n\geqslant \big(S_n^{(d)}/\binom{n}{d} \big)^{1/d}$  (see (12.3) in \cite{St}). 

Let us prove that (iv) $\Rightarrow$ $d^2=o(n)$. By Theorem 1 in \cite{F} (or Theorem 1.1 in \cite{G}), $\p(S_n-n\leqslant 1)\geqslant 1/13$ for all $n$. By standard inequalities for concentration functions (see Theorem 2.22 in \cite{P}), there exists $C>0$ such that $\p(1-\lambda\leqslant S_n-n\leqslant 1)\leqslant C(\lambda+1)/\sqrt{n}$ for all $\lambda\geqslant 0$ and $n$.  
Taking $\lambda=\gamma\sqrt{n}+1$ for small enough $\gamma>0$, we will guarantee that $\p(- \gamma \sqrt{n}\leqslant S_n-n\leqslant 1)\leqslant1/26$ for all large enough $n$. As a result, we will get that   $\p( S_n-n\leqslant - \gamma \sqrt{n})\geqslant1/26+o(1)$. If (iv) holds, then $ \p( S_n-n\leqslant - \varepsilon  n/d)\to 0$ for any $\varepsilon >0$. The latter is possible only if, for any fixed $\varepsilon>0$, $\varepsilon n/d>\gamma \sqrt{n}$ for all large enough $n$, i.e. $n\geqslant n_0(\varepsilon)$. The latter means that $\sqrt{n}=o(n/d)$ or, equivalently, $d^2=o(n).$ 

Assume further that $d^2=o(n)$. Letting $b_n:=n/d$, we see that $b_n\to\infty$ and $nb_n^{-2}\to0$. By classical weak laws of large numbers (see Theorem 2 in \cite{H} with a discussion above it and page 317 in \cite{Lo}),   $(S_n-n)/b_n=o_\p(1)$  if and only if 
\[n\p( |Z-1|>b_n )+nb_n^{-2} \e (Z-1)^2 \mathbf{1}( |Z-1|\leqslant b_n )+nb_n^{-1}  |\e (Z-1)  \mathbf{1}(|Z-1|\leqslant  b_n )|\to 0 .\]
The last condition could simplified as follows. As $Z\geqslant 0$ a.s. and $b_n\to\infty$, then $Z>1$ when $|Z-1|>b_n$ for any large enough  $n$. This and $\e Z=1$ imply that (for large $n$)  
\[n\p( |Z-1|>b_n )=n\p(  Z-1 >b_n )\leqslant  nb_n^{-1}\e  (Z-1) \mathbf{1}((Z-1)>b_n )=\]\[=
-nb_n^{-1} \e  (Z-1) \mathbf{1}((Z-1)\leqslant b_n ).\]
We also have that $nb_n^{-2} \e (Z-1)^2 \mathbf{1}( |Z-1|\leqslant b_n,Z-1<0)\leqslant nb_n^{-2}=o(1).$
Therefore, the above results yield that (ii) $\Leftrightarrow$ $n\e \phi( (Z-1)/b_n)\to 0$,
where \[\phi(x)=\frac{x^2}2\mathbf{1}(x\in[0,1])+\Big(x-\frac{1}2\Big)\mathbf{1}(x> 1),\quad x\in \bR.\] Using that $\phi $ is non-decreasing and convex, we derive
\[0\leqslant \e \phi(Z/b_n)-\e\phi((Z-1)/b_n)\leqslant 
 \e\phi'(Z/b_n)/b_n=\]\[=b_n^{-2}\e  Z \mathbf{1}(Z\leqslant b_n)+b_n^{-1}\p(Z>b_n)\leqslant 2b_n^{-2}\e  Z =2b_n^{-2}=o(n^{-1}).\]
In addition, $\kappa(x)/2\leqslant\phi(x)\leqslant \kappa(x)$ for $\kappa(x)=x^2\mathbf{1}(x\in[0,1])+x\mathbf{1}(x> 1)$ and all $x\in\bR$.
Thus, (ii) $\Leftrightarrow$ $n\e \phi( (Z-1)/b_n)\to 0$ $\Leftrightarrow$ $n\e \phi(  Z /b_n)\to 0 \Leftrightarrow$ (iii).
   
Let us now show that (i) $\Leftrightarrow$ (ii). By the above arguments, we could assume that $d^2=o(n).$ Also, suppose for a moment that $d\to\infty$. Then, by Lemma \ref{l4}, we have the representation \eqref{eq:17}. Denote further by $\bar \e f(Z)$ the empirical mean $n^{-1}\sum_{k=1}^nf(Z_k)$ for any function $f$. Also, let $\epsilon_n,\varepsilon_n,\delta_n,\gamma_n$ be random sequences tending to 0 in probability, not necessarily the same at each occurrence. As shown in the proof of Lemma \ref{l4}, $\rho d/n\pto 1.$ Using the well-known inequalities  $\ln(x)\leqslant x-1$ and $x \leqslant (x+1)\ln(x+1)\leqslant x(2+x)/2$   valid for every $x\geqslant 0$, we see that by the definition of $\rho$,
\[\frac{d}{n}=  \bar\e  \frac{Z /\rho}{Z /\rho+1}\leqslant  \bar\e \ln\Big(\frac{Z}{ \rho}+1 \Big) \leqslant   \bar\e  \frac{ (Z /\rho)(2+ Z /\rho)}{2(Z /\rho+1)}= \frac{d}{n}+
 \bar\e \frac{ Z^2 /(2\rho)}{  Z+\rho }=\frac{d}{n}+\Big(\frac 1 2+\gamma_n\Big)\bar\e \frac{ dZ^2 }{ Z+\rho  } \]
and
\[ d \ln\frac{ \rho d}n =(1+\varepsilon_n)d \Big(\frac{\rho d}n-1\Big)=(1+\varepsilon_n)\Big(d\bar\e\frac{Z\rho}{Z+\rho}-d\Big)  =(1+\varepsilon_n)\Big( d\Big(\frac{S_n}n-1\Big)- d \,\bar\e  \frac{Z ^2}{Z+\rho }\Big) .\]
As a result, when $d\to\infty$, we get from Lemma \ref{l4} that for $n\geqslant 1,$
\begin{equation}\label{eq:21} (1+\varepsilon_n)\Big(d(S_n/n-1)-d\bar\e  \frac{Z^2}{ Z+\rho }\Big)+\delta_n \leqslant \ln\frac{S_n^{(d)}}{\binom{n}{d}},
\end{equation}
\begin{equation}\label{eq:22}
\ln\frac{S_n^{(d)}}{\binom{n}{d}} \leqslant(1+\varepsilon_n)d(S_n/n-1) -(1/2+\epsilon_n)d\bar\e  \frac{Z^2}{ Z+\rho}+\delta_n.
\end{equation} 

Suppose (ii) holds and $d\to\infty$. Then (iii) holds and, by $\rho d/n\pto 1,$
\[ d\bar\e \frac{Z ^2 }{ Z +\rho  } \leqslant   d\bar\e  Z  I(dZ > n  )+\frac{ 1}{  \rho d/n } \frac{d^2}{n } \bar\e  Z ^2 I(dZ \leqslant n )\pto 0.\]
Therefore, in view of \eqref{eq:21} and \eqref{eq:22}, (i) holds.

Suppose (ii) holds and $d\not\to\infty$. We need the following elementary fact: 
\begin{center} if
$\xi_n\pto 0$, then there exist  (nonrandom) $ \alpha_n\in\bN$, $n\geqslant 1$, such that $\alpha_n\to\infty$ and $\alpha_n\xi_n\pto 0$.
\end{center} Therefore, by (ii), there exists  $d_*=d_*(n)\in\bN$ such that $d(n)\leqslant d_*(n)$ for all $n$, $d=o(d_*)$ (in particular, $d_*\to\infty$), and (ii) holds for $d$ replaced by $d_*$. As shown above, the latter implies that (i) holds for $d$ replaced by $d_*$. By Maclaurin's inequality  (see (12.3) in \cite{St}),
\[\frac{S_n}n-1\geqslant\ln \frac{S_n}n\geqslant\frac1d \ln\frac{S_n^{(d)}}{\binom{n}{d}} \geqslant \frac1{d_*} \ln\frac{S_n^{(d_*)}}{\binom{n}{d_*}} ,\]
where $\ln 0=-\infty$. 
Combining the above relations yields (i) (for $d=d(n)$). 

Suppose (i) holds and $d\to\infty$. Then, noting that  $(z+n/d)/(z+\rho) $ always lies between $1$ and $n/(\rho d)=1+o_\p(1)$ for $z\geqslant 0$, we get from \eqref{eq:22} that  
\begin{equation}\label{eq:15}
d(S_n/n-1)-(1/2+\varepsilon_n) \bar\e \frac{dZ ^2}{Z +n/d }\geqslant \delta_n.
\end{equation} 
It follows from \eqref{eq:15} and $ \bar\e Z ^2/(Z +n/d)\geqslant\bar\e Z  I(dZ > n  )/2+\bar\e Z ^2I(dZ \leqslant n )/(2n/d) $
that
\begin{equation}\label{eq:23} d(S_n/n-1)=d(\bar\e Z-\e Z)\geqslant (1/4+\epsilon_n)\Big(d \bar\e  Z \mathbf{1}(dZ>n)+ \frac{d^2}{n}\,\bar\e  Z^2 \mathbf{1}(dZ\leqslant n)\Big)+ \delta_n.
\end{equation} 
As $\var((d^2/n)\bar\e  Z ^2 \mathbf{1}(dZ \leqslant n))\leqslant d^2(d/n)^2\e Z ^4\mathbf{1}(dZ \leqslant n)/n \leqslant (d^2/n)\e Z ^2\mathbf{1}(dZ \leqslant n)  $, the Chebyshev inequality yields
\[d(S_n/n-1)\geqslant (1/4+\epsilon_n) \frac{d^2}{n}\, \e  Z ^2 \mathbf{1}(dZ\leqslant n)+O_\p(1)\sqrt{\frac{d^2}{n}\, \e  Z ^2 \mathbf{1}(dZ\leqslant n)}+ \delta_n.\] 

Let us show that $d^2\e  Z ^2 \mathbf{1}(dZ\leqslant n)=O(n)$. Suppose the contrary: for some $n_k\to\infty$ (as $k\to\infty$) and $d_k=d(n_k)$, 
$(d_k^2/n_k)\e  Z ^2 \mathbf{1}(d_kZ\leqslant n_k)\to\infty$. If so, the right-hand side of the last inequality (with $n_k,d_k$ replacing $n,d$) goes to infinity in probability. But this is impossible, because by Theorem 1 in \cite{F}, $\p( d_k(S_{n_k}/n_k-1)\leqslant  d_k/n_k)=\p( S_{n_k}\leqslant  n_k+1)\geqslant 1/13$ for all $k$. Thus,  $d^2\e  Z ^2 \mathbf{1}(dZ\leqslant n)=O(n)$.

The variables $d(\bar \e Z\mathbf{1}(dZ\leqslant n)-\e Z\mathbf{1}(dZ\leqslant n))$ have uniformly bounded second moments not exceeding $(d^2/n)\e  Z ^2 \mathbf{1}(dZ\leqslant n)$
and, hence, are bounded in probability. Therefore, \eqref{eq:23} yields that 
\begin{equation}\label{eq:24} (3/4-\epsilon_n)d (\bar \e Z \mathbf{1}(dZ>n )-\e Z\mathbf{1}(d Z>n )) \geqslant  (1/4+\epsilon_n)d \e Z\mathbf{1}(d Z>n )+O_\p(1).
\end{equation}
The latter proves that $d\e  Z  \mathbf{1}(dZ>n)=O(1)$. Indeed, suppose the contrary: for some $n_k\to\infty$ and $d_k=d(n_k)$, 
$d_k\e  Z \mathbf{1}(d_kZ> n_k)\to\infty$. If so,  the right-hand side of \eqref{eq:24}  goes to infinity in probability. But this is impossible, as by Theorem 1 in \cite{F}, $\p( d(\bar \e Z \mathbf{1}(dZ>n )- \e Z\mathbf{1}(d Z>n )))\leqslant (d/n) \e Z\mathbf{1}(d Z>n ) )\geqslant 1/13$ and $ \e Z\mathbf{1}(d Z>n )\leqslant \e Z=1$.  

Let $\xi_n:=d(S_n/n-1)$, $\eta_n:=d(\bar \e Z\mathbf{1}(dZ\leqslant n)- \e Z\mathbf{1}(dZ\leqslant n))$, and $\zeta_n:=d\bar \e Z\mathbf{1}(dZ>n)$.
Obviously, $\e\xi_n=\e \eta_n=0$, $\xi_n=\eta_n+\zeta_n-\e\zeta_n$, and $\zeta_n\geqslant 0$ a.s. As we have argued above, $\p(\xi_n<-\varepsilon)\to 0$  for all $\varepsilon>0$ and $\e \eta_n^2,\e\zeta_n$ are bounded by some constant $K>0.$ To prove (ii), we only need to show that $ \p(\xi_n>\varepsilon)\to 0$  for all $\varepsilon>0$.
The well-known formula for the expectation $\e \xi_n=\int_{\bR_+}\p(\xi_n>x)dx-\int_{\bR_+}\p(\xi_n<-x)dx$ gives
\[\int_0^\infty \p(\xi_n>x)dx=\int_0^\infty \p(\xi_n<-x)dx\leqslant  \varepsilon +(M-\varepsilon)\p(\xi_n<-\varepsilon)+\int_M^\infty \p(\xi_n<-x)dx\]
for all $\varepsilon>0$ and $M\geqslant \varepsilon$. When $x>2K$, we have $\e\zeta_n-x/2<0$ and $\zeta_n-\e\zeta_n>-x/2$ a.s., therefore, for such $x$, $\xi_n<-x$ implies $\eta_n<-x/2$ and $\p(\xi_n<-x)\leqslant\p(\eta_n<-x/2)$. As a result, taking $M=\max\{2K,\varepsilon,4K/\varepsilon\}$ and applying Chebyshev's inequality, we get
\[\int_0^\infty \p(\xi_n>x)dx\leqslant  \varepsilon +(M-\varepsilon)\p(\xi_n<-\varepsilon)+\int_M^\infty \frac{4Kdx}{x^2}\leqslant 2\varepsilon +M\p(\xi_n<-\varepsilon).\]
Therefore, for all $\varepsilon>0$,
\[\varlimsup_{n\to\infty}\int_0^\infty \p(\xi_n>x)dx\leqslant 2\varepsilon.\]
This is possible only if $\int_0^\infty \p(\xi_n>x)dx\to0$. Thus, $\p(\xi_n>\varepsilon)\leqslant \varepsilon^{-1} \int_0^\infty \p(\xi_n>x)dx\to 0$ for all $\varepsilon>0$. We have shown that (i)$\Rightarrow$(ii) when $d\to\infty$. 

Assume that (i) holds and $d\not\to\infty$. To prove (ii), suppose the contrary: (ii) does not hold, i.e. there are $\varepsilon,\delta>0$ and $(n_k)_{k=1}^\infty$ such that $\p(|d_k(S_{n_k}/n_k-1)|>\varepsilon)\geqslant \delta$ for all $k$ (here $d_k=d(n_k)$) and  $n_k\to\infty$ as $k\to\infty$. Such  $ d_k $ are unbounded over $k$ (if not, $d_k(S_{n_k}/n_k-1)\to 0$ a.s. and in probability by the strong law of large numbers). As a result, there is a subsequence $(n_{k_l})_{l=1}^\infty$ such that $d_{k_l}\to\infty$ and $d_{k_l}(S_{n_{k_l}}/n_{k_l}-1)\not\to 0$ in probability  as $l\to\infty$. However, it is shown above that (i) and $d\to\infty$ imply (ii). The same argument shows that the latter holds for the subsequence $(n_{k_l})_{l=1}^\infty$ and $d_{k_l}=d(n_{k_l})$.   We get the contradiction. Thus, (i)$\Rightarrow$(ii). This finishes the proof of the theorem.  \end{proof}

\section*{Supplementary material}
\begin{proof}[Proof of Lemma \ref{l4}.] We will follow the proof of Theorem in  \cite{HS}. 
For any $\rho>0$, we have 
\[2\pi S_n^{(d)}=	 \int_{-\pi}^\pi  \sum_{k=0}^nS_n^{(k)}\frac{(\rho e^{ i\theta})^{n-k}}{(\rho e^{ i\theta})^{n-d}}  d\theta  = \int_{-\pi}^\pi  \prod_{k=1}^n(Z_k+\rho e^{i\theta})\frac{d\theta}{(\rho e^{ i\theta})^{n-d}}= \int_{-\pi}^\pi                                                                                                                                                                                                                                                                                                                                                                                                          e^{u(\theta)}d\theta,\] where $S_n^{(0)}:=1$, $u(\theta)=u(\theta,n,\rho,Z_1,\ldots,Z_n):=n\bar \e  \ln(Z +\rho e^{i\theta})-(n-d)(\ln\rho+i \theta) $, and $\bar \e f(Z)$ denotes  the empirical mean  $n^{-1}\sum_{k=1}^nf(Z_k)$ for any function $f$.
For such $u$,  
\[u'(0)=in\bar \e \frac{\rho}{ Z +\rho }-i(n-d),\quad u''(0)=-n\bar \e\frac{\rho}{ Z+\rho}+n\bar \e\frac{\rho^2}{( Z+\rho )^2}=-n\bar\e   \frac{\rho Z }{( Z +\rho )^2},\]
\[| u'''(\theta)|= \Big|n\bar\e \frac{\rho Z (\rho e^{i\theta}-Z )}{( Z +\rho e^{i\theta} )^3}\Big|\leqslant 
    n\bar\e  \frac{\rho Z}{(\rho \cos\theta+Z )^2}\leqslant     n\bar\e  \frac{\rho Z }{(\rho/\sqrt{2} +Z)^2}\leqslant 2|u''(0)| \] for every $|\theta|\leqslant \pi/4 $, where we have also used that $| \rho e^{i\theta}-Z_k | \leqslant | \rho e^{i\theta}+Z_k| $ for such $\theta$.  
    
Let $\rho=\rho(n,d,Z_1,\ldots,Z_n)>0$ solve $u'(0)=0$, i.e.
\begin{equation}\label{eq:16}
\bar\e \frac{\rho}{ Z +\rho }= 1-\frac{d}n \quad\text{or, equivalently,}\quad \bar\e \frac{Z}{ Z +\rho }= \frac{d}n  ,
\end{equation} 
if such solution exists  and $\rho =1$ otherwise.  By the weak law of large numbers,    
  \[    \inf_{r>0 } \bar\e  \frac{r}{ Z +r}= \bar\e \mathbf{1}(Z =0)\leqslant \p(Z=0)+\varepsilon<1-\frac dn<1=\sup_{r>0 } \bar\e  \frac{r}{ Z+r  }\]
with probability $1-o(1)$ as $n\to\infty$, where $\varepsilon=\p(Z\neq 0)/2$. So, $\p(\rho\text{ satisfies \eqref{eq:16}})\to 1$. We have $\p(\rho>c)\to 1$ for every $c>0,$ since with probability $1-o(1)$,
  \[\bar\e \frac{c}{Z+c}\leqslant \e \frac{c}{Z+c}+\varepsilon <1-\frac{d}{n}=\bar \e \frac{\rho}{Z+\rho}\]
for $\varepsilon=(1-\e c/(Z+c))/2$.  Moreover, $\rho d/n\pto 1$, as  for all $c>0$ with probability $1-o(1)$,
\[\e \frac{Zc }{Z+c}+o_\p(1)=\bar\e \frac{Zc }{Z+c}\leqslant \frac{\rho d}{n}=\bar\e \frac{Z\rho }{Z+\rho}\leqslant \bar\e Z=1+o_\p(1)\]  
and, by the dominated convergence theorem, $\e (Zc)/(Z+c)\to 1$ as $c\to\infty.$

As $\p(Z_k\geqslant 0)=1$ and the function $|Z_k+\rho e^{i\theta}|=(\rho^2+Z^2+2\rho Z\cos\theta)^{1/2}$ is decreasing in $|\theta|\in [0,\pi]$ for all $k$, then $|e^{u(\theta)}|$ is decreasing in $|\theta|$ and   
\[\frac1{2\pi}\int_{\delta<|\theta|\leqslant  \pi}  |e^{u(\theta)}|d\theta\leqslant |e^{u(\delta)}|\quad\text{for any  $\delta>0$}.\] 
Assume further that $u'(0)=0$ (this happens with probability $1-o(1)$, as is shown above). By Taylor's formula with the reminder in the integral form for all $\delta\in[0,\pi/4]$, 
\[\big|e^{u(\delta)-(u(0)+u''(0)\delta^2/2)}\big|=\big|e^{v(\delta)\delta^3 }\big|\leqslant e^{|v( \delta) \delta^3 |} \leqslant e^{ -u''(0)\delta^3 },\]
where $ v(\delta):=\int_0^1 u'''(t\delta)(1-t)^2dt/2$ satisfies $|v(\delta)|\leqslant -u''(0)/2.$
Fix arbitrary  $\delta\in (0,\pi/4]$. We have
$|S_n^{(d)}-(2\pi)^{-1}\int_{-\delta}^{\delta} e^{u(\theta)}d\theta|\leqslant e^{u(0)+u''(0)(\delta^2/2-\delta^3)}$. Also, by Taylor's formula,
\[  \int_{-\delta}^{\delta} e^{u(\theta)}d\theta=e^{u(0)} \int_{-\delta}^{\delta} e^{u''(0)\theta^2/2+v( \theta )\theta^3}   d\theta 
=\frac{e^{u(0)}}{ U} \int_{-\delta U}^{ \delta U} e^{-z^2 (1+r(z))/2}   dz ,\]
where $U:=\sqrt{-u''(0)}$ and $r(z):=-2v(z/U)z/U^{3}$ satisfies $|r(z)| \leqslant \delta$ for  $z\in[-\delta U,\delta U]$. 

Suppose for a moment that $U\pto\infty$. Taking $\delta=U^{-1/2}\wedge(1/2)$, we get that $|r(z)|<1/2$ and $|e^{-z^2 (1+ r(z))/2}|= e^{-z^2/2}|e^{-z^2 r(z) /2}| \leqslant e^{-z^2/2} e^{|z^2 r(z)| /2}\leqslant  e^{-z^2 /4}$ when $|z|\leqslant  \delta U$. Thus,   
\[\Big|\int_{|z|> \delta U}  e^{-z^2 (1+ r(z))/2}   dz\Big|\leqslant
\int_{|z|> \delta U}  e^{-z^2 /4}    dz=o(1)\]
(here  $\delta U\pto \infty$ by the choice of $\delta$). 
In addition, using the inequality $|e^{z}-1|\leqslant|z|e^{|z|}, $ $z\in\bC$, that follows from the series expansion of $e^z$, we conclude that  
\[\Big|\int_{\bR}   (e^{-z^2 (1+ r(z))/2} -e^{-z^2/2})   dz\Big|\leqslant 
 \int_{\bR}z^2|r(z)| e^{z^2(-1+| r(z)|)/2}  dz \leqslant \delta  \int_{\bR} z^2 e^{-z^2/4}dz\pto0  \]
because $\delta=U^{-1/2}\wedge(1/2)\pto 0$.   
 Combining the above bounds along with $e^{-U^2(\delta^2/2-\delta^3)}=o_\p(1)/U$, we see that if $-u''(0)=U^2\pto \infty$, then 
 \[S_n^{(d)}=\frac{e^{u(0)}}{2\pi \sqrt{- u''(0) }}\int_\bR e^{-z^2}dz (1+o_\p(1))=\frac{e^{u(0)}}{ \sqrt{-2\pi u''(0) }}(1+o_\p(1)).\]
In addition, when $d^2=o(n)$, we have $\binom{n}{d}=(1+o(1)) n^d/d! =(1+o(1))( 2\pi d )^{-1/2} (en)^d/ d^d$ by the Stirling formula.
Thus, assuming $-u''(0)\pto \infty, $ then we get the asymptotic formula
\[\ln\frac{S_n^{(d)}}{\binom{n}{d}}-(u(0)-(\ln(-u''(0)/d))/2-d-d\ln(n/d))\pto 0.\]

To finish the proof, we will check that $-u''(0)/d\pto 1$ and, in  particular,  $-u''(0)\pto \infty $.
Using Chebyshev's sum inequality $n\sum_{k=1}^n a_kb_k\leqslant  \sum_{k=1}^na_k\sum_{k=1}^nb_k$ that holds for every $a_1\leqslant \ldots\leqslant a_n$ and  $b_1\geqslant \ldots\geqslant b_n$, we conclude that with probability $1-o(1)$,
   \[-u''(0)= n\bar\e \frac{ Z}{  Z+\rho  }\frac{\rho}{   Z+\rho   }\leqslant n \bar\e \frac{Z}{  Z+\rho  }\bar\e\frac{\rho }{   Z+\rho   }=n(d/n)(1-d/n)=d(1+o(1)).\] In addition, by the Cauchy-Schwartz inequality, with probability $1-o(1)$,
 \[-u''(0)=  n\rho \bar\e \frac{ Z }{ ( Z+\rho)^2  }\geqslant 
\frac{n \rho}{ \bar\e Z } \Big( \bar\e  \frac{ Z }{   Z +\rho  }\Big)^2=\frac{n( \rho d/n)d/n}{\e Z+o_\p(1)}=d(1+o_\p(1))\to\infty\]
(recall that $\e Z=1$ and $\rho d/n\pto 1$). 
This finishes  the proof of the lemma.
\end{proof}

\begin{proof}[Proof of Lemma \ref{l3}]
Setting $[n]_d:=\{i\subseteq\{1,\ldots,n\}:|i|=d\}$, fix arbitrary $i_0,j_0\subseteq [n]_d$ with $|i_0\cap j_0|=t$. By definition,  $\gamma(s,t)$ is the number of pairs $(k_0,l_0)$ such that $k_0,l_0\subseteq [n]_d$,  $|k_0\cap l_0|=t$, $|\Lambda_3(i_0,j_0,k_0,l_0)|/2+|\Lambda_4(i_0,j_0,k_0,l_0)|=s,$ and $|\Lambda_1(i_0,j_0,k_0,l_0)|=0$.  Let us count such pairs. 

Set further $(i,j,k,l,u,v)=(i_0\setminus j_0,j_0\setminus i_0,k_0 \setminus l_0,l_0 \setminus k_0,i_0\cap j_0,k_0\cap l_0)$. By definition, 
\begin{equation}\label{def}
\text{$i,j,u$ are pairwise disjoint, $|i|=|j|=d-t$, and $|u|=t$ (the same for $k,l,v$).}
\end{equation}
 Recall  that $\Lambda_c(i_0,j_0,k_0,l_0),$ $c=1,\ldots,4,$ contains all $\alpha\in i_0\cup j_0\cup k_0\cup l_0$ that are covered by exactly $c$ sets among $i_0,j_0,k_0,l_0$. Therefore, 
\[|\Lambda_1|=0\quad\text{iff}\quad i\cup j\subseteq k\cup l\cup v  \quad\text{and}\quad k\cup l\subseteq i\cup j\cup u, \]
\[|\Lambda_3|= |((i\cup j)\cap v) \cup (u \cap (k\cup l))|=|(i\cup j)\cap v|+|u \cap (k\cup l)|,\quad |\Lambda_4|=|u\cap v|.\]
Put $q:=|(i\cup j)\cap v|$ and $r:=|u\cap v|$. Let us show that it follows from $|\Lambda_1|=0$ and $|i\cup j|= |k\cup l|$ that $|(i\cup j)\cap v|=|u \cap (k\cup l)|=q$. The relation $i\cup j\subseteq k\cup l\cup v$ implies that
\[(i\cup j)\setminus v=(i\cup j)\setminus (u\cup v) \subseteq (k\cup l\cup v)\setminus (u\cup v)=(k\cup l)\setminus u.\] Similarly, 
$k\cup l\subseteq i\cup j\cup u$ implies that $(k\cup l)\setminus u\subseteq (i\cup j)\setminus v$. This proves that
\[q=|(i\cup j)\cap v|=|i\cup j|-|(i\cup j)\setminus  v|=|k\cup l|-|(k\cup l)\setminus u|=|(k\cup l)\cap u|.\]
Combining the above relations gives $|\Lambda_3|=2q$ and $s=q+r=|(i\cup j)\cap v|+|u\cap v|=|(i\cup j\cup u)\cap v|\leqslant |v|=t.$ In particular, this proves that $\gamma(s,t)=0$ when $s>t$. 

Suppose $s\in\{0,\ldots,t\}$. For given $(i,j,u)$ satisfying \eqref{def} and $r\leqslant s$, let us compute the number of triples $(k,l,v)$  satisfying \eqref{def} and such that $|\Lambda_1|=0,$ $|\Lambda_3|=2(s-r)$,  and $|\Lambda_4|=|u\cap v|=r$, where $\Lambda_c=\Lambda_c(i\cup u,j\cup u,k\cup v,l\cup v)$ for $c=1,3,4.$ 

The number of possible choices of $v\cap u$ is $\binom{t}{r},$ as $|u|=t$ and $|u\cap v|=r$.

Given $v\cap u,$ the number of possible choices of $v\cap (i\cup j)$ is $\binom{2(d-t)}{s-r} ,$ as $(i\cup j)\cap u=\varnothing$, $|i\cup j|=2(d-t),$ and $|v\cap (i\cup j)|=s-r$.  

Given $v\cap (i\cup j\cup u),$ the number of possible choices of $v\setminus (i\cup j\cup u)$ is $\binom{n-(2d-t)}{t-s} ,$ as $|i\cup j\cup u|=2d-t$ and $|v\setminus (i\cup j\cup u)|=t-s$.
  
Given $v,$ the number of possible  choices of $(k\cup l)\cap u$ is $\binom{t-r}{s-r} ,$ as  $(k\cup l)\cap u=(k\cup l)\cap (u\setminus v)$, $|u\setminus v|=t-r$, and $|(k\cup l)\cap u|= s-r $.

Given $v$ and $(k\cup l)\cap u$,  the number of possible choices of $(k,l)$ is $\binom{2(d-t)}{d-t},$ as $k\cap l=\varnothing$ and, because of $|\Lambda_1|=0$, $k$ and $l$ could be only composed from the elements of the set $((i\cup j)\setminus v)\cup ((k\cup l)\cap u)$, which has $2(d-t)$ elements in total.

Combining the above bounds and varying $r$, we deduce that
\[\gamma(s,t)=\sum_{r=0}^s \binom{t}{r} \binom{2(d-t)}{s-r} \binom{t-r}{s-r}\binom{n-2d+t}{t-s}\binom{2(d-t)}{d-t} \mathbf{1}(s\leqslant t).\]
As is well known, $\binom{2(d-t)}{m} \leqslant 2^{2(d-t)}$ for any  $m.$
Also, it follows from $s\leqslant t<d\leqslant n$ that
 \[\frac{\binom{n-2d+t}{t-s}}{ \binom{n}{d}}=\frac{\frac{(n-2d+t)!}{(n-2d+s)! }}{\frac{(n-d+(t-s))!}{(n-d)!}} \frac{\frac{d!}{(t-s)!}}{\frac{n!}{(n-d+(t-s))!}}=\prod_{z=1}^{t-s}\frac{ n-2d+s+z}{n-d+z}
 \prod_{m=0}^{d-(t-s)-1}\frac{d-m}{n-m}\leqslant\]\[\leqslant \prod_{m=0}^{d-(t-s)-1}\frac{d-m}{n-m}=\Big(\frac d n\Big)^{d-(t-s)} \prod_{m=0}^{d-(t-s)-1}\frac{1-m/d}{ 1-m/ n  }\leqslant  \Big(\frac d n\Big)^{d-(t-s)},\]
 where $\prod_{z=1}^{t-s}$ is equal to one if $t=s$. As a result, denoting $2^{4(d-t)}\binom{n}{d}( d/n)^{d-(t-s)}\mathbf{1}(s\leqslant t)$ by  $M$, we get the desired bound
\[\gamma(s,t)\leqslant M\sum_{r=0}^s \binom{t}{r}  \binom{t-r}{s-r}  =  M\binom{t}{s}\sum_{r=0}^s \binom{s}{r} =  2^s\binom{t}{s}M .\]
\end{proof}



\begin{thebibliography}{99}
\bibitem{A5} Adamczak, R.: Logarithmic Sobolev inequalities and concentration of measure for convex functions and polynomial chaoses, \emph{Bull. Pol. Acad. Sci. Math.} \textbf{53}, (2005), 221--238. MR: {2163396}

\bibitem{A12} Adamczak, R., and Latala, R.: Tail and moment estimates for chaoses generated by symmetric random variables with logarithmically concave tails, \emph{Ann. Inst. Henri Poincar\'e Probab. Stat.} \textbf{48}, (2012) 1103--1136. MR: {3052405}

\bibitem{A} Adamczak, R.: Some remarks on the Dozier–Silverstein theorem for random matrices with dependent entries. \emph{Random Matrices: Theory and Applications} \textbf{2}, (2013), 1250017. MR: {3077829}

\bibitem{BZ} Bai, Z., and Zhou, W.: Large sample covariance matrices without independence structures in columns. \emph{
Stat. Sinica} \textbf{ 18}, (2008), 425-–442. MR: {2411613}

\bibitem{BVH} Bryson, J., Vershynin, R., and Zhao, H.: Marchenko–Pastur law with relaxed independence conditions. \emph{ Random Matrices: Theory and Applications}, (2021), DOI: 10.1142/S2010326321500404. 

\bibitem{DL} Dembczak-Kołodziejczyk, A., and Lytova, A.: On the empirical spectral distribution for certain models related to sample covariance matrices with different correlations. arXiv preprint, (2021), arXiv:2103.03204.

\bibitem{EH} van Es, A. J., and Helmers, R.: Elementary symmetric polynomials of increasing order. \emph{Probab. Theory Related Fields} \textbf{80}, (1988), 21--35. MR: {0970469}

\bibitem{F} Feige, U.: On sums of independent random variables with unbounded variance and estimating the average degree in a graph. \emph{SIAM Journal on Computing} \textbf{35}, (2006), 964--984. MR: {2203734}

\bibitem{G} Garnett, B.: Small deviations of sums of independent random variables. \emph{J. Comb. Theory Ser. A.} \textbf{169},  (2020), 105119. MR: {3981219}

\bibitem{GG} Girko, V., and Gupta, A.K.:  Asymptotic behavior of spectral function of empirical
covariance matrices. \emph{ Random Oper. and Stoch. Eqs.} \textbf{ 2}, (1994), 44--60. MR: {1276248}

\bibitem{GSS} G\"otze, F., Sambale, H., and Sinulis, A.: Concentration inequalities for polynomials in $\alpha$-sub-exponential random variables. \emph{Electronic Journal of Probability} \textbf{26}, 1--22. MR: {4247973}

\bibitem{GLPP} Guédon, O., Lytova, A., Pajor, A., and Pastur, L.: The central limit theorem for linear eigenvalue statistics of the sum of independent random matrices of rank one. \emph{Spectral Theory and Differential Equations. Amer. Math. Soc. Transl. Ser} \textbf{2}, (2014), 145--164. MR: {3307778}

\bibitem{H} Hall, P.: On the rate of convergence in the weak law of large numbers. \emph{Ann. Probab.} \textbf{10}, (1982), 374--381. MR: {0647510}

\bibitem{HS} Halász, G., and Szekely, G.J.: On the elementary symmetric polynomials of independent random variables. \emph{ Acta Math. Acad. Sci. H.} \textbf{ 28}, (1976), 397--400. MR: {0423491}

\bibitem{EK} El Karoui, N.: Concentration of measure and spectra of random matrices: Applications to correlation matrices, elliptical distributions and beyond. \emph{Annals of Applied Probability} \textbf{19}, (2009), 2362--2405. MR: {2588248}

\bibitem{L6} Latala, R.: Estimates of moments and tails of Gaussian chaoses. \emph{Ann. Probab.} \textbf{34}, (2006), 2315--2331. MR: {2294983}

\bibitem{LL3} Latala, R., and Lochowski, R.: Moment and tail estimates for multidimensional chaos generated by positive random variables with logarithmically concave tails. \emph{Progress in Probability} \textbf{56}, (2003), 77--92. MR: {2073428}

\bibitem{L11} Lehec J.: Moments of the Gaussian chaos. In \emph{Seminaire de Probabilites XLIII}, Springer, Berlin, (2011), 327--340. MR: {2790379}

\bibitem{Lo} Loeve, M.: Probability Theory. 3rd ed. Van Nostrand, Princeton, (1963). MR: {0203748}

\bibitem{L} Lytova, A.: Central limit theorem for linear eigenvalue statistics for a tensor product version of
sample covariance matrices. \emph{ J. Theor. Probab.} \textbf{31}, (2018), 1024--1057. MR: {3803923}

\bibitem{M} Major, P.: The limit behavior of elementary symmetric polynomials of i.i.d. random variables when their order tends to infinity. \emph{Ann. Probab.} \textbf{27},  (1999), 1980-2010. MR: {1742897}
\bibitem{MP} Marcenko, V.A., and Pastur, L.A.: Distribution of eigenvalues in certain sets of random
matrices. \emph{  Mat. Sb. (N.S.)} \textbf{ 72}, (1967), 507--536. MR: {0208649}

\bibitem{PP} Pajor, A., and Pastur L.: On the limiting empirical measure of eigenvalues of the sum of rank one matrices with log-concave distribution. \emph{ Studia Math.} \textbf{ 195}, (2009), 11--29. MR: {2539559}

\bibitem{P} Petrov, V.V.: \emph{ Limit theorems of probability theory.} Clarendon Press, Oxford, (1995). MR: {1353441}


\bibitem{St} Steele, J.M.: \emph{The Cauchy-Schwarz master class. An introduction to the art of mathematical inequalities.}Cambridge University Press, Cambridge, (2004). MR: {2062704}

\bibitem{S} Szekely, G.J.: 	
A limit theorem for elementary symmetric polynomials of independent random variables. \emph{Z. Wahrsch. Verw. Gebiete} \textbf{59}, (1982), 355--359. MR: {0721631}


\bibitem{V} Vershynin, R.: Concentration inequalities for random tensors. Bernoulli, 26(4), (2020), 3139--3162. MR: {4140540}


\bibitem{Y14} Yaskov, P.: The universality principle for spectral distributions of sample covariance matrices, (2015), arXiv:1410.5190.


\bibitem{Y15} Yaskov, P.: Variance inequalities for quadratic forms with applications. \emph{ Math. Methods Statist.} \textbf{24}, (2015), 309--319. MR: {3437388}

\bibitem{Y16} Yaskov, P.: Necessary and sufficient conditions for the Marchenko-Pastur theorem. \emph{ Electron. Commun. Probab.} \textbf{ 21}, Article no. 73, (2016), 1--8. MR: {3568347}


\bibitem{Y18} Yaskov, P.: LLN for quadratic forms of long memory time series and its applications in random matrix theory. \emph{ J. Theor. Probab.} \textbf{31}, (2018), 2032--2055. MR: {3866606}

\bibitem{Y21} Yaskov, P.: Limiting spectral distribution for large sample covariance matrices with graph-dependent elements. arXiv preprint, (2021), arXiv:2105.09625.
\end{thebibliography}
\end{document}